\def\dual#1#2#3{{\langle #1, #2 \rangle_{H^{-1/2}(#3),H^{1/2}(#3)}}}
\def\into{\longrightarrow}
\def\inside{{\rm insd}}
\let\phi=\varphi
\DeclareMathOperator{\cotanh}{cotanh}
\newtheorem{theorem}{Theorem}[section]
\newtheorem{lemma}[theorem]{Lemma}
\newtheorem{proposition}[theorem]{Proposition}
\newtheorem{corollary}[theorem]{Corollary}
\theoremstyle{definition}
\newfont\rsfseleven{rsfs10 scaled 1100}
\def\CalD{\mbox{\normalfont\rsfseleven D}}
\theoremstyle{remark}
\newtheorem{remark}[theorem]{Remark}
\numberwithin{equation}{section}
\def\curl{\text{curl}}
\def\eps{\varepsilon}
\let\epsilon=\varepsilon
\let\phi=\phi
\def\into{\longrightarrow}
\begin{document}

\title[Lagrangian controllability of fluids]{Lagrangian controllability of\\  inviscid incompressible fluids: \\
a constructive approach.}
\thanks{The authors wish to thank O. Glass, G. Legendre and F.X. Vialard for fruitful discussions and particularly on numerical issues.}

%%    author one information
 \author{T. Horsin}
 \address{CNAM, Laboratoire M2N EA7340, 292 rue Saint-Martin, Case 2D5000, F-75003 Paris }
 \curraddr{}
 \email{thierry.horsin@lecnam.net}

%%    author two information
 \author{O. Kavian}
 \address{Universit\'e de Versailles Saint-Quentin; Laboratoire de Mathématiques de Versailles (UMR 8100); 45 avenue des Etats-Unis; 78030 Versailles cedex; France}
 \curraddr{}
 \email{kavian@math.uvsq.fr}
% \thanks{The authors wish to thank O. Glass, G Legendre and F.X. Vialard for fruitfull discussions, and particularly the two }
 
 \subjclass[2000]{Primary }
 
 \keywords{Euler equation, Lagrangian controllability}

%\dedicatory{}

\date{May 10, 2016}

\begin{abstract}
We present here a constructive method of Lagrangian approximate controllability for the Euler equation. We emphasize on different options that could be used for numerical recipes: either,  in the case of a bi-dimensionnal fluid, the use of formal computations in the framework of explicit Runge approximations of holomorphic functions by rational functions, or an approach based on the study of the range of an operator by showing a density result. For this last insight in view of numerical simulations in progress, we analyze through a simplified problem the observed instabilities.
\end{abstract}
\maketitle

%%%%%%%%%%%%%%%%%%%%%%%
\section{Introduction and main results}\label{sec:Intro}

\noindent Let $\Omega \subset {\Bbb R}^N$, with $N \geq 2$, be a bounded domain  with a regular boundary $\partial\Omega$, and let $\Gamma$ be a part of $\partial \Omega$ with nonempty relative interior.
\begin{figure}[!ht]
\begin{center}
\resizebox{!}{3.5cm}{\includegraphics{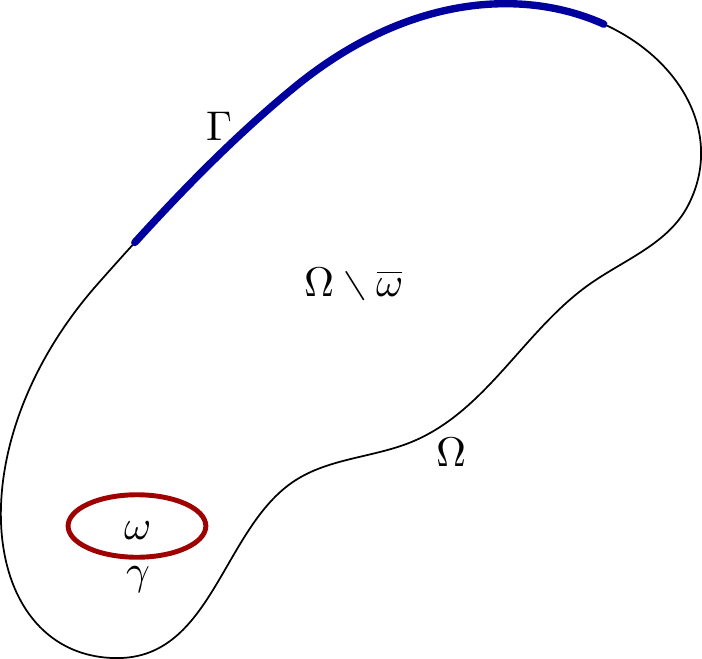}}
\end{center}
%\caption{}
\end{figure}
Assume that a subdomain $\omega \subset \subset \Omega$ is given such that its boundary $\gamma := \partial\omega$ is a Jordan curve and let us denote by ${\bf n}$ the exterior normal to the boundary of $\Omega \setminus \overline{\omega}$.  The question we address in this paper is the following: given a function $h$ defined on $\gamma$, can one find a function $v$ defined on $\partial\Omega$ having its support ${\rm supp}(v)\subset \Gamma$, and such that the solution $\Psi$ of
\begin{equation}\label{eq:Psi-v-1}
\Delta\Psi = 0\quad\mbox{in }\, \Omega, \qquad {\partial\Psi \over \partial{\bf n}} = v\quad\mbox{on }\, \partial\Omega,
\end{equation}
satsifies 
\begin{equation}\label{eq:Psi-v-h}
{\partial\Psi \over \partial{\bf n} } = h\quad \mbox{on }\, \gamma\mbox{ ?}
\end{equation}
The motivation of this question lies in its application to the Lagrangian control of Euler equation. Indeed, if such a $v$, and thus such a $\Psi$ exist, then upon considering a function $h$ depending smoothly on $t \in [0,T]$ for some $T > 0$, one may reasonably expect that $v$ and $\Psi$ might also depend smoothly on $t$, and therefore, upon setting 
$$u := \nabla \Psi, \qquad p := -\partial_{t}\Psi - {1 \over 2}|\nabla\Psi|^2,$$
the pair $(u,p)$ is a solution of the Euler equation
\begin{subnumcases}{\label{eq:euler} ~}
\partial_t u +(u\cdot \nabla) u + \nabla p = 0 \quad \text{ in }\; (0,T)\times \Omega,\label{eq:euler.1}\\ 
{\rm div}(u)  = 0 \quad \text{ in }\; (0,T)\times \Omega,\label{eq:euler.2}\\
u(0,\cdot)  = u_0 \quad  \text{ in }\; \Omega,\label{eq:euler.3}\\
u\cdot{\bf n} = 0 \quad  \text{ on }\; (0,T)\times (\partial \Omega\setminus \Gamma),\label{eq:euler.4}
\end{subnumcases}
where in addition we have $u\cdot {\bf n} = v$ on $\Gamma$, and the value of the normal component of $u(t,\cdot)$ on $\gamma$ is prescribed, that is $u(t,\cdot)\cdot {\bf n} = h(t,\cdot)$  on $\gamma$. From this point of view, one can say that a control problem is solved by the means of the mapping $(h,\gamma) \mapsto v$.
This is precisely the Lagrangian control of \eqref{eq:euler}, as investigated by O. Glass \& T. Horsin in \cite{GlHo08} and \cite{GlHo10}. As a matter of fact, proving the Lagrangian controllability is a consequence of the fact that one may prescribe the velocity of a certain set of fluid particles, so that its topological and regularity properties along its motion are preserved. With this approach of the problem, it is then enough to prescribe the normal velocity of this set of particles at every point of its boundary.  This is the motivation of our first result.
\medskip

Before stating the first result of this paper, let us recall briefly the following definitions and notations. A set $\gamma \subset {\Bbb R}^N$ with $N = 2$ or $N = 3$ is called a Jordan curve (when $N = 2$) or a Jordan surface (when $N = 3$) if one has $\gamma = \Phi(S^{N-1})$ where $\Phi : S^{N-1} \into {\Bbb R}^N$ is a continuous and injective mapping. Then it is known that ${\Bbb R}^N\setminus \gamma$ has exactly two connected components, one of them being bounded, which will be denoted by $\inside(\gamma)$ (the {\it inside\/} of $\gamma$).
A smooth (resp. analytic) Jordan curve or surface corresponds to the case where in addition $\Phi$ is smooth (resp. analytic).

Let $\Omega \subset {\Bbb R}^N$ be a smooth bounded domain, and let $\gamma \subset \subset \Omega$ be a smooth Jordan curve  or  surface. We shall denote by  $\Omega_2:=\inside(\gamma)$ the {\it inside\/} of $\gamma$ (see above), and by $\Omega_1:=\Omega\setminus\overline{\Omega_2}$ its complement. Also we will denote by ${\bf n}_{12}$ the unit normal vector on $\gamma$ pointing from $\Omega_1$ towards $\Omega_2$, and naturally we will denote ${\bf n}_{21}=-{\bf n}_{12}$, the normal pointing from $\Omega_{2}$ into $\Omega_{1}$. As usual we will denote by $H^{1/2}(\gamma)$ the space of traces on $\gamma$ of functions in $H^1(\Omega_2)$, which coincides with the traces on $\gamma$ of functions in $H^1(\Omega_1)$, since $\gamma$ is sufficiently smooth. We will denote by  $H^{-1/2}(\gamma)$ the dual of $H^{1/2}(\gamma)$, the duality between the two being denoted by $\langle\cdot,\cdot\rangle$, or $\langle\cdot,\cdot\rangle_{H^{-1/2}(\gamma),H^{1/2}(\gamma)}$ if it is necessary to avoid ambiguities.  Also we will denote by $H^{-1/2}_{m}(\gamma)$ the orthogonal of the constants in $H^{-1/2}(\gamma)$:
$$H^{-1/2}_m(\gamma):=\left\{v\in H^{-1/2}(\gamma) \; ; \; \langle v,1\rangle = 0\right\}.$$
We are given $\Gamma$, a closed connected part of $\partial \Omega$ with a non empty relative interior in $\partial \Omega$, and we will denote 
$$H^{-1/2}_m(\Gamma):=\left\{v\in H^{-1/2}(\partial\Omega)\; ; \; v=0\text{ in }\CalD'(\partial\Omega\setminus \Gamma),\,\mbox{ and }\, \langle v,1\rangle=0\right\}.$$
Our first result is the following:

\begin{theorem}\label{lem:Im-Dense}
Let $\Omega\subset {\Bbb R}^N$ be a smooth bounded domain, $\gamma$ be a Jordan curve or surface in $\Omega$, and let $\Gamma$ be a closed, connected part of $\partial\Omega$ with non empty relative interior.
For $v \in H^{-1/2}_{m}(\Gamma)$ denote by $\Psi_{v} \in H^1(\Omega)$  the unique solution of
\begin{subnumcases}{\label{eq:PsivDeLapin} ~} %\label{eq:PsivDeLapin}
-\Delta \Psi_v=0\text{ in }\Omega,\label{eq:Psivharm}\\
\dfrac{\partial \Psi_v}{\partial {\bf n}}=v\text{ on }\partial \Omega,\\
\int_\Omega\Psi_v dx=0,
\end{subnumcases}
and define the operator $\Lambda_{\gamma} : H^{-1/2}_m(\Gamma) \into  H^{-1/2}_m(\gamma)$ by setting
\begin{equation}\label{eq:DefLambdaGamma}
\Lambda_{\gamma}v := \nabla\Psi_v\cdot{\bf n}_{12|\gamma}.
\end{equation}
Then the operator $\Lambda_{\gamma}$ has a dense image in $H^{-1/2}_{m}(\gamma)$.
\end{theorem}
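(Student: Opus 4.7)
\noindent\emph{Proof plan.} The strategy is Hahn-Banach. Since $(H^{-1/2}_m(\gamma))^{\ast} \simeq H^{1/2}(\gamma)/\mathbb{R}$, density of $\Lambda_\gamma(H^{-1/2}_m(\Gamma))$ in $H^{-1/2}_m(\gamma)$ is equivalent to the following assertion: any $\phi \in H^{1/2}(\gamma)$ satisfying
\[
\dual{\Lambda_\gamma v}{\phi}{\gamma} = 0 \qquad \mbox{for every } v \in H^{-1/2}_m(\Gamma)
\]
must be a constant on $\gamma$. I fix such a $\phi$ and aim to prove this.

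The cornerstone is an auxiliary transmission test function $\chi$ on $\Omega \setminus \gamma$ tailored to $\phi$. Let $\Phi \in H^1(\Omega_2)$ be any lift of $\phi$, and let $\tilde\chi \in H^1(\Omega)/\mathbb{R}$ be the unique solution of
\[
\int_\Omega \nabla \tilde\chi \cdot \nabla \eta \, dx \, = \, \int_{\Omega_2} \nabla \Phi \cdot \nabla \eta \, dx \qquad \forall \eta \in H^1(\Omega),
\]
which exists by Lax-Milgram (the compatibility condition is automatic since the right-hand side annihilates constants). Setting $\chi_1 := \tilde\chi|_{\Omega_1}$ and $\chi_2 := \tilde\chi|_{\Omega_2} - \Phi$ produces two harmonic functions satisfying, in the weak sense, the transmission conditions $\chi_1 - \chi_2 = \phi$ and $\partial \chi_1 / \partial {\bf n}_{12} = \partial \chi_2 / \partial {\bf n}_{12}$ on $\gamma$, together with the Neumann condition $\partial\chi_1 / \partial {\bf n} = 0$ on $\partial\Omega$.

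The next step is to apply Green's identity to the pairs $(\Psi_v, \chi_1)$ on $\Omega_1$ and $(\Psi_v, \chi_2)$ on $\Omega_2$: the bulk terms vanish by harmonicity, the Neumann condition on $\partial\Omega$ combined with $\mathrm{supp}(v) \subset \Gamma$ reduces $\partial\Omega$-contributions to $-\int_\Gamma \chi_1 v \, d\sigma$, and the two transmission conditions telescope the $\gamma$-contributions into $-\dual{\Lambda_\gamma v}{\phi}{\gamma}$. This yields the duality identity
\[
\int_\Gamma \chi_1 v \, d\sigma + \dual{\Lambda_\gamma v}{\phi}{\gamma} = 0 \qquad \forall v \in H^{-1/2}_m(\Gamma).
\]
By assumption the second summand vanishes, so the trace $\chi_1|_\Gamma$ pairs to zero with every mean-zero distribution on $\Gamma$, which forces it to be constant on $\Gamma$.

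The conclusion is then a unique continuation argument. The harmonic function $\chi_1$ on the connected open set $\Omega_1 = \Omega \setminus \overline{\Omega_2}$ has, on the relatively open piece $\Gamma$ of its boundary, both zero Neumann data and constant Dirichlet data. Extending $\chi_1 - \mathrm{const}$ by zero across $\Gamma$ produces an $H^1$-function that is weakly harmonic in a two-sided neighborhood of an interior point of $\Gamma$, and Aronszajn's unique continuation principle then forces $\chi_1$ to be identically constant in $\Omega_1$. Consequently $\partial\chi_2 / \partial {\bf n}_{12} = \partial\chi_1 / \partial {\bf n}_{12} = 0$ on $\gamma$, so $\chi_2$ is harmonic in $\Omega_2$ with vanishing Neumann trace and is therefore constant as well. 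Hence $\phi = \chi_1|_\gamma - \chi_2|_\gamma$ is constant, as required. The delicate step is the unique continuation across $\Gamma$, which relies on the smoothness of $\partial\Omega$ near $\Gamma$ to justify that the zero-extension is genuinely $H^1$-harmonic.
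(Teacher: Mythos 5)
Your proof is correct, but it follows a genuinely different route from the paper's. The paper proves $N(\Lambda_{\gamma}^*)=\{0\}$ by computing $\Lambda_{\gamma}^*$ explicitly through the two Poincar\'e--Steklov operators $\Lambda_{1}$ (Neumann-to-Dirichlet on $\Omega_{1}$) and $\Lambda_{2}$ (Dirichlet-to-Neumann on $\Omega_{2}$): the representation formula obtained there reads $\dual{\Lambda_{\gamma}v}{\phi+\Lambda_{1}\Lambda_{2}\phi}{\gamma}=\dual{v}{\xi}{\Gamma}$, and in order to reach an \emph{arbitrary} test function $\widetilde\phi$ the authors must invert $T_{12}=I+\Lambda_{1}\Lambda_{2}$, which costs them an abstract lemma on products of nonnegative self-adjoint operators (Lemma \ref{lem-AB-accr}) and Proposition \ref{prop:T12bij}. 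Your transmission construction short-circuits this: the pair $(\chi_1,\chi_2)$ you build by Lax--Milgram is exactly the pair $(\xi,\zeta)$ of the paper with trace jump equal to the given $\phi$ and matching normal derivatives across $\gamma$, but you obtain it for \emph{every} $\phi\in H^{1/2}(\gamma)$ in one stroke, so the invertibility of $T_{12}$ never enters. The resulting duality identity $\int_{\Gamma}\chi_1 v\,d\sigma+\dual{\Lambda_{\gamma}v}{\phi}{\gamma}=0$ is the same identity in disguise. The second divergence is at the unique continuation stage: the paper first treats the case where $\Gamma$ is an entire boundary component (Step 1), then reduces the general partial-boundary case to it by an exterior domain-extension trick (Step 2); you instead handle a general $\Gamma$ with nonempty relative interior directly, via zero Neumann data on all of $\partial\Omega$ plus constant Dirichlet data on $\Gamma$, the zero-extension across $\Gamma$, and interior unique continuation (real-analyticity of harmonic functions already suffices here; Aronszajn is overkill). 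Your argument is shorter and self-contained; what the paper's detour buys is the explicit operators $\Lambda_{1},\Lambda_{2},T_{12}$, which are of independent interest for the constructive and numerical considerations developed later in the text. Two small points worth making explicit in a final write-up: the connectedness of $\Omega_{1}=\Omega\setminus\overline{\Omega_{2}}$ is needed to propagate ``$\chi_1$ constant near $\Gamma$'' to all of $\Omega_{1}$ (the paper uses this implicitly as well), and the deduction ``$\int_{\Gamma}\chi_1 v\,d\sigma=0$ for all $v\in H^{-1/2}_{m}(\Gamma)$ implies $\chi_{1}|_{\Gamma}$ constant'' uses that the mean-zero constraint only costs a one-dimensional correction, which you handle correctly by subtracting the constant before extending.
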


This means that given $h \in H^{-1/2}_{m}(\gamma)$, while in general it is not possible to find $v \in H^{-1/2}_{m}(\Gamma)$ such that \eqref{eq:Psi-v-1} and \eqref{eq:Psi-v-h} are satisfied (due for example to the mere fact that $\Psi$, solution to \eqref{eq:Psi-v-1}, is analytic in $\Omega$), nevertheless given any $\epsilon > 0$ one may find $v \in H^{-1/2}_{m}(\Gamma)$ such that 
\begin{equation}\label{eq:Approx-v-h}
\mbox{there exists }\, \Psi := \Psi_{v}\;\mbox{satisfying \eqref{eq:PsivDeLapin} and }\quad \|h - \partial\Psi/\partial{\bf n}\|_{H^{-1/2}} < \epsilon.
\end{equation}
Thus, using the above Theorem one may solve a Lagrangian approximate control of Euler equation, since in general an exact control is not possible.

In fact in section \ref{sec:Runge}, as far as the dimension $N = 2$ is concerned, we give another insight about the fact that the operator $\Lambda_{\gamma}$ has a dense image, by using a complex variable approach. Indeed, using the classical Runge theorem (see for instance \cite{Rudin} and section \ref{sec:Runge}), it is possible to give a procedure for the construction of an appropriate $v$ such that \eqref{eq:Approx-v-h} is fulfilled, through an expansion in series.
\medskip

Going back to the motivations of the above Theorem, let us recall that the Lagrangian control problem is the following: given a time interval $[0,T]$, a family of smooth subdomains $\omega_{t} \subset \subset \Omega$ depending continuously on $t\in [0,T]$ (in a sense yet to be precised), and a  function $h : [0,T]\times \Omega \into {\Bbb R}$, can one find a solution $(u,p)$ of the Euler system \eqref{eq:euler} such that for all $t\in [0,T]$ one has $u(t,\cdot)\cdot {\bf n} = h(t,\cdot)$ on $\partial\omega_{t}$? We should point out that in the system \eqref{eq:euler} the initial data $u_{0}$ is given, and the solution $u$ is determined through appropriate boundary datas on $\Gamma$ ensuring the existence and uniqueness of the solution $u$, those boundary datas playing the role of the control on $\Gamma$.

As it is observed by O. Glass \& T. Horsin in \cite{GlHo08} and \cite{GlHo10}, in general this control problem does not have a solution, essentially due to some restrictions intrinsically imposed by the vorticity $\nabla\wedge u$ when $u$ is a solution to \eqref{eq:euler}.

However if the above problem is relaxed into an approximate control problem, a positive answer has been given in the references previously quoted, provided some restrictions are imposed on the subdomains $\omega_{t}$ and on the function $h$. 

Recall that if $\gamma_{0}$ and $\gamma_{1}$ are merely continuous images of $\mathbb{S}^{N-1}$, they are said to be homotopic in $\Omega$, when there exists $g \in C([0,1]\times \mathbb{S}^{N-1} ; \Omega)$ such that $g(j,\cdot)$ is a parameterization of $\gamma_{j}$ for $j=0$ and $j=1$; moreover if $\gamma_1$ is reduced to a point, then one says that $\gamma_0$ is contractible in $\Omega$. The following result is proved in \cite{GlHo08} (here and in the sequel we denote by $\mathrm{meas}(\omega)$ the Lebesgue measure of a measurable set $\omega \subset {\Bbb R}^N$):

\begin{theorem}\label{th:lagrcontr2}
Assume that $N=2$, that $\gamma_0$ and $\gamma_1$ are hotomopic smooth Jordan curves in $\Omega$, and that, denoting by $\inside(\gamma)$ the {\em inside\/} of $\gamma$, the following conditions are satisfied:
$$\mathrm{meas}(\inside(\gamma_0)) = \mathrm{meas}(\inside(\gamma_1)),\qquad
\mbox{and}\qquad u_0\in C^\infty(\Omega).$$ 
Then approximate controllability between $\gamma_0$ and $\gamma_1$ holds at any time $T>0$ in $L^\infty$-norm, in the sense described below in Theorem \ref{th:lagrcontrpot}. 
\end{theorem}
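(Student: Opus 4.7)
My plan is to reduce the Lagrangian control of the Euler system to the elliptic density statement of Theorem~\ref{lem:Im-Dense}. First, because $\gamma_0$ and $\gamma_1$ are homotopic smooth Jordan curves in $\Omega$ with equal interior area, a Moser-type deformation argument produces a time-dependent divergence-free vector field $Y(t,\cdot)$ on $\Omega$, tangent to $\partial\Omega \setminus \Gamma$, whose flow $\phi_t$ carries $\gamma_0$ through a smooth family $\gamma_t := \phi_t(\gamma_0)$ to $\gamma_1$. The equal-area hypothesis is essential here, since $\mathrm{div}(Y)=0$ preserves area and is precisely what enables the interpolation inside $\Omega$. The induced normal velocity $h(t,\cdot) := Y(t,\cdot) \cdot {\bf n}_{\gamma_t}$ is smooth with zero mean on each $\gamma_t$, and is the target to be realized on the moving boundary of the transported set of particles.

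Second, for each time $t$ and prescribed $\epsilon > 0$, Theorem~\ref{lem:Im-Dense} applied with $\gamma = \gamma_t$ yields $v(t,\cdot) \in H^{-1/2}_m(\Gamma)$ such that the harmonic potential $\Psi_{v(t)}$ satisfies
$$\|\nabla\Psi_{v(t)} \cdot {\bf n}_{\gamma_t} - h(t,\cdot)\|_{H^{-1/2}(\gamma_t)} < \epsilon.$$
By time discretization and a continuity/selection argument for the operator $\Lambda_{\gamma_t}$ with respect to the data, one obtains a boundary control $v \in C([0,T]; H^{-1/2}_m(\Gamma))$. In the irrotational case $u_0 = 0$, the unique Euler solution with this control is $u = \nabla\Psi_{v}$, and approximate controllability in $L^\infty$-norm follows directly by a Gronwall estimate on characteristics: the Lagrangian flow of $u$ sends $\gamma_0$ into an $\mathcal{O}(\epsilon)$-neighborhood of $\gamma_1$ at time $T$.

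For general smooth $u_0$, the Euler solution must be written as $u = \nabla\Psi + w$, where the rotational correction $w$ is the Biot-Savart reconstruction of the vorticity $\omega = \curl u$ transported along the characteristics of $u$. The extra normal component $w\cdot{\bf n}_{\gamma_t}$ on the moving curve obstructs a direct application of Theorem~\ref{lem:Im-Dense}; one instead applies the density statement to the corrected target $h(t,\cdot) - w\cdot{\bf n}_{\gamma_t}$ and iterates. Convergence of this fixed-point scheme rests on the $L^\infty$ well-posedness theory of 2D Euler (Yudovich) and on the continuity of the Biot-Savart map with respect to the transported vorticity, together with the smoothness of $u_0$ which keeps the vortical perturbation controllable in $C^{1,\alpha}$.

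The main obstacle is precisely this nonlinear coupling between the linear density result, which concerns only harmonic potentials, and the transport of vorticity by the full Euler flow. The hardest technical point is to control the drift between the prescribed isotopy $\phi_t$ and the genuine Lagrangian flow of $u$ uniformly on $[0,T]$, in a function space compatible with 2D Euler well-posedness; both the smoothness of $u_0$ and the flexibility afforded by the density of the range of $\Lambda_{\gamma_t}$ are used here in an essential way. This is exactly the program carried out in \cite{GlHo08}, and leads to the precise statement recalled in Theorem~\ref{th:lagrcontrpot}.
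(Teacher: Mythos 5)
This theorem is not proved in the paper: it is quoted verbatim from \cite{GlHo08}, and the paper only describes the strategy (construction of a divergence-free field $X$ satisfying \eqref{eq:champX}, the potential-flow result of Theorem \ref{th:lagrcontrpot} for $u_0=0$, and Coron's return method for $u_0\neq 0$). Your first step --- the Moser/equal-area construction of $Y$ and the reduction to approximating $Y\cdot{\bf n}_{\gamma_t}$ by normal derivatives of harmonic potentials --- matches that strategy. But the rest of your argument has three concrete gaps. First, Theorem \ref{lem:Im-Dense} only gives density in $H^{-1/2}_m(\gamma_t)$, whereas the conclusion of Theorem \ref{th:lagrcontr2} is in $L^\infty$-norm and Theorem \ref{th:lagrcontrpot} requires $\|\nabla_x\psi\cdot{\bf n}-X\cdot{\bf n}\|_{L^\infty(\gamma_t)}\leq\delta$; an $H^{-1/2}$ approximation of the normal velocity does not control the flow pointwise, and the paper explicitly defers this upgrade to \cite[section 2.2]{GlHo10} (Remark \ref{rem:2-3}). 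The actual proof uses harmonic (analytic) approximants built via Runge/Cauchy--Kowalevsky so as to get $L^\infty$ closeness. Second, your ``continuity/selection argument'' producing $v\in C([0,T];H^{-1/2}_m(\Gamma))$ is not justified: a density statement gives no canonical selection, and the paper instead handles time dependence by compactness, taking finitely many times $t_i$, fixed harmonic functions $\phi_i$, and a partition of unity $\rho_i$ as in \eqref{eq:formdupot}. Third, your claim that the conclusion ``follows directly by a Gronwall estimate on characteristics'' contradicts the paper's own warning that estimate \eqref{eq:deriveenormaldephiprochedeX} ``in itself does not imply readily \eqref{eq:controlagpotok}'': closeness of normal velocities on a moving curve does not give pointwise closeness of flows, only closeness of the transported sets, and this requires a separate argument.

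For general $u_0\in C^\infty(\Omega)$ you propose a Biot--Savart fixed-point iteration correcting the target by $-w\cdot{\bf n}_{\gamma_t}$. This is not the route of the cited proof, which uses the return method: one rescales time so that the controlled potential part of the velocity is large compared to the rotational part, and the vorticity then perturbs the trajectories only by $O(\eps)$ over the short effective time interval. Your scheme might in principle be made to work, but as written you assert convergence without any estimate showing the map is a contraction, and the interaction between the vorticity transported by the full (control-dependent) flow and the harmonic correction is precisely the difficulty the return method is designed to bypass. As it stands the proposal is a plausible program outline rather than a proof, with the $H^{-1/2}$-to-$L^\infty$ upgrade and the treatment of nonzero $u_0$ being the substantive missing pieces.
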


A similar result, but with stronger assumptions on $\gamma_{0},\gamma_{1}$, is given in \cite{GlHo10} when $N=3$.

The cornerstone of the proof of theorem \ref{th:lagrcontr2} relies on the resolution of the controllability question in the case when $u_0=0$, and the construction of a vector field $X$ satisfying \eqref{eq:champX} (A remark concerning the existence of such $X$ is given further). Indeed in that case, the following result is proved in \cite{GlHo08} and \cite{GlHo10}, which motivates the approach of this paper.

\begin{theorem}\label{th:lagrcontrpot}
Let either $N=2$ and the assumptions of theorem \ref{th:lagrcontr2} be satisfied, or let $N=3$ and assume that $\gamma_0$ and $\gamma_1$ are smooth Jordan surfaces, contractible in $\Omega$. Then, given $\eps>0$ and any vector field $X\in C^\infty([0,T]\times \Omega,{\Bbb R}^N)$ satisfying equations \eqref{eq:champX}, there exists $\delta >0$ and a function 
$\psi\in C^\infty([0,T]\times{\overline \Omega},{\Bbb R})$ such that 
\begin{subequations}\label{eq:lagrcontpot}
\begin{gather}
\psi(0,\cdot)=\psi(T,\cdot)=0\label{eq:lagrcontpot.1}\\
\forall t\in [0,T],\quad \Delta_x \psi(t,\cdot) = 0\label{eq:lagrcontpot.2}\\
\nabla_x\psi(t,\cdot)\cdot{\bf n}=0 \quad\mbox{ on }\partial \Omega\setminus \Gamma\label{eq:lagrcontpot.3}
\end{gather}
\end{subequations} and such that, if for each $t\in[0,T]$ we denote by $\gamma_t:=\Phi^X(0,t,\gamma_0)$ then we have
\begin{equation}\| \nabla_x \psi\cdot {\bf n}-X\cdot{\bf n}\|_{L^\infty(\gamma_t)}\leq \delta,\label{eq:deriveenormaldephiprochedeX} \end{equation}
\begin{equation}\Phi^{\nabla_x\psi}(0,\cdot,\cdot)([0,T]\times\gamma_0) \subset \Omega,\label{eq:flotdansOmegabis}\end{equation}  and 
\begin{equation}\|\Phi^{\nabla_x\psi}(0,T,\gamma_0)-\gamma_1\|_\infty\leq \eps.\label{eq:controlagpotok}\end{equation}
\end{theorem}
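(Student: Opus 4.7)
The plan is to apply Theorem \ref{lem:Im-Dense} at each instant $t\in[0,T]$ to the moving curve or surface $\gamma_{t}:=\Phi^{X}(0,t,\gamma_{0})$, and then to paste the resulting harmonic potentials into a function $\psi$ that is smooth in time. I assume that $X$ vanishes at $t=0$ and at $t=T$, which is either built into the conditions \eqref{eq:champX} or may be arranged beforehand by a standard time reparameterization $t\mapsto s(t)$ with $s'(0)=s'(T)=0$ preserving the trajectories of $X$ as point sets, in particular preserving the endpoint $\gamma_{1}=\Phi^{X}(0,T,\gamma_{0})$.

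For each fixed $t\in[0,T]$ the datum $h_{t}:=X(t,\cdot)\cdot{\bf n}_{12}|_{\gamma_{t}}$ belongs to $H^{-1/2}_{m}(\gamma_{t})$: the mean-zero property follows from $\mathrm{div}\,X=0$ (included in \eqref{eq:champX}) combined with the conservation in $t$ of $\mathrm{meas}(\inside(\gamma_{t}))$. Theorem \ref{lem:Im-Dense} then provides, for any prescribed $\eta>0$, an element $v_{t}\in H^{-1/2}_{m}(\Gamma)$ such that the associated harmonic potential $\Psi_{v_{t}}$ satisfies $\|\Lambda_{\gamma_{t}}v_{t}-h_{t}\|_{H^{-1/2}(\gamma_{t})}<\eta$. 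Since $\Psi_{v_{t}}$ is harmonic on $\Omega$ with Neumann datum supported in $\Gamma$, and $\gamma_{t}$ stays at uniformly positive distance from $\Gamma$, interior elliptic regularity upgrades this $H^{-1/2}$ bound to an $L^{\infty}(\gamma_{t})$ bound of comparable size on $\nabla\Psi_{v_{t}}\cdot{\bf n}-h_{t}$, which is what eventually produces \eqref{eq:deriveenormaldephiprochedeX}.

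The map $t\mapsto v_{t}$ need not be smooth a priori, so to construct a $t$-smooth potential I would use a partition of unity in time. Since $t\mapsto\gamma_{t}$ is $C^{\infty}$ and the data $h_{t}$, pulled back to a fixed reference surface via $\Phi^{X}(t_{0},t,\cdot)$, are equicontinuous in $t$, for each $t_{0}\in[0,T]$ there is an open interval $I(t_{0})$ around $t_{0}$ and a single $v_{(t_{0})}\in H^{-1/2}_{m}(\Gamma)$ that simultaneously approximates $h_{t}$ within $\delta/2$ in $L^{\infty}(\gamma_{t})$ for all $t\in I(t_{0})$. Extracting a finite sub-cover $(I_{j})_{1\leq j\leq N}$ of $[0,T]$ together with a smooth partition of unity $(\chi_{j})$ subordinate to it, and choosing $v_{1}=v_{N}=0$ with $\chi_{1}\equiv 1$ near $t=0$ and $\chi_{N}\equiv 1$ near $t=T$ (admissible since $h_{0}=h_{T}=0$), I would set
\begin{equation*}
\psi(t,x):=\sum_{j=1}^{N}\chi_{j}(t)\,\Psi_{v_{j}}(x).
\end{equation*}
By linearity of \eqref{eq:PsivDeLapin}, $\psi$ is then $C^{\infty}$ in $t$, harmonic in $x$, with Neumann datum supported in $\Gamma$, and vanishes at $t\in\{0,T\}$; thus \eqref{eq:lagrcontpot.1}--\eqref{eq:lagrcontpot.3} all hold, and $\nabla_{x}\psi(t,\cdot)\cdot{\bf n}$ is within $\delta$ of $h_{t}$ on $\gamma_{t}$ uniformly in $t$, because a convex combination of $\delta/2$-approximants of $h_{t}$ is still a $\delta/2$-approximant.

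The estimates \eqref{eq:flotdansOmegabis} and \eqref{eq:controlagpotok} finally come from a continuous-dependence argument: only the normal component of a divergence-free field determines the deformation of a transported curve or surface as a set, and on $\gamma_{t}$ the normal component of $\nabla_{x}\psi$ is $L^{\infty}$-close to that of $X$ within error $\delta$, so a Gr\"onwall-type estimate forces $\Phi^{\nabla_{x}\psi}(0,t,\gamma_{0})$ to remain within $O(\delta)$ of $\gamma_{t}$ uniformly in $t$; choosing $\delta$ small enough then yields both \eqref{eq:flotdansOmegabis} and \eqref{eq:controlagpotok}. The main obstacle I anticipate is precisely this last step, namely converting an $L^{\infty}$ closeness of the normal components of the two vector fields (an estimate living only on the moving family $(\gamma_{t})$) into closeness of the associated transported curves, since the tangential parts of $\nabla_{x}\psi$ and of $X$ are a priori uncontrolled; the classical observation that tangential components only reparameterize the image curve should make this rigorous but requires some care in the moving-frame comparison.
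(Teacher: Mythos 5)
Your overall skeleton --- a finite cover of $[0,T]$, one harmonic potential per time node, a smooth partition of unity $\chi_j$ in time, and a final flow-comparison step --- is exactly the structure the paper attributes to the proof in \cite{GlHo08,GlHo10} (see the discussion around \eqref{eq:formdupot}). But the step that actually produces each approximant is wrong. You invoke Theorem \ref{lem:Im-Dense} to get $\|\Lambda_{\gamma_t}v_t-h_t\|_{H^{-1/2}(\gamma_t)}<\eta$ and then claim that ``interior elliptic regularity upgrades this $H^{-1/2}$ bound to an $L^\infty(\gamma_t)$ bound of comparable size.'' This is false. The difference $\nabla\Psi_{v_t}\cdot{\bf n}-h_t$ is not the boundary trace of any harmonic quantity: $h_t=X(t,\cdot)\cdot{\bf n}$ is an arbitrary smooth function on $\gamma_t$, and there is no elliptic equation whose solution this difference is. Even restricting attention to the harmonic part, smallness of $\nabla\Psi_v\cdot{\bf n}$ in $H^{-1/2}(\gamma)$ gives no control in $L^\infty(\gamma)$: a normal derivative behaving like $\cos(k\theta)$ on $\gamma$ has $H^{-1/2}$ norm of order $k^{-1/2}$ but $L^\infty$ norm $1$, and nothing in the construction bounds the higher norms of $\Psi_{v_t}$ that an interpolation argument would require. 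This is precisely why the paper titles Section 2 ``Approximate controllability in $H^{-1/2}$-norm,'' and in Remark \ref{rem:2-3} refers to \cite[section 2.2]{GlHo10} for the (nontrivial) passage from the density result to Lagrangian controllability. The actual proof of \eqref{eq:deriveenormaldephiprochedeX} in the cited works obtains $L^\infty$ (indeed $C^k$) closeness by first extending $X(t,\cdot)$ to a genuinely harmonic field in a neighborhood of $\gamma_t$ via a Cauchy--Kowalevsky argument, and then approximating that harmonic field by an admissible one via Runge's theorem (in $2$D) or its analogue; there both the target and the approximant are harmonic near $\gamma_t$, which is what makes the uniform estimate possible.

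A second, smaller gap: for the last step you propose a Gr\"onwall argument to pass from \eqref{eq:deriveenormaldephiprochedeX} to \eqref{eq:flotdansOmegabis} and \eqref{eq:controlagpotok}. The paper explicitly warns that \eqref{eq:deriveenormaldephiprochedeX} ``does not imply readily'' \eqref{eq:controlagpotok}: the closeness of the normal components is only known \emph{on} $\gamma_t$, i.e.\ along the reference trajectory, whereas the points transported by $\nabla_x\psi$ leave $\gamma_t$ immediately, so the hypothesis of your Gr\"onwall estimate is not available where you need it; one needs an estimate in a tubular neighborhood of $\bigcup_t\gamma_t$ (which the Cauchy--Kowalevsky/Runge construction provides, but the $H^{-1/2}$ trace estimate does not), together with the observation about tangential components only reparameterizing the image. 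You do flag this difficulty honestly, but as written the argument does not close. Finally, note that $\psi\in C^\infty([0,T]\times\overline\Omega)$ requires each $\Psi_{v_j}$ to be smooth up to the boundary, which a general $v_j\in H^{-1/2}_m(\Gamma)$ does not give; this is repairable by density but should be said.
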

The precise meaning of this result is that, up to the construction of the vector field $X$, when $u_0 \equiv 0$ in \eqref{eq:euler.3}, one can obtain the approximate Lagrangian controllability of \eqref{eq:euler} between $\gamma_0$ and $\gamma_1$ in time $T$ by means of potential flows.
\medskip

The use of such a potential flow is related to the so-called {\it return method\/}, introduced by J.M~Coron~\cite{92COR}, which involves an appropriate change of scale in time, and is used when dealing with the case $u_0 \not\equiv 0$. 
\medskip

Let us point out that estimate \eqref{eq:deriveenormaldephiprochedeX} is necessary to obtain the approximate controllability, but in itself it does not imply readily \eqref{eq:controlagpotok}. 
\medskip

As a matter of fact, one can consider the approximate Lagrangian controllability from two different perspectives.  The first one is related to the problem of approximately extending harmonic functions, by an appropriate resolution of some elliptic equations, and the use of trace operators acting on spaces such as $H^{\pm 1/2}(\Gamma)$. The second point of view, only in the case  of dimension $N = 2$, consists in a constructive approach using Runge's approximation theorem, as treated in Section \ref{sec:Runge}.

\bigskip
In the sequel we shall adopt the following notations and conventions: for a sufficiently regular vector field $X:[0,T]\times \Omega\into \mathbb{R}^N$, let $\Phi^X$ denote the flow of $X$ defined by:
\begin{subequations}
\begin{gather}\Phi^X:[0,T]\times [0,T]\times \Omega\into \mathbb{R}^N,\\
\begin{cases}
\dfrac{\partial \Phi^X}{\partial t}(s,t,x)=X(t,\Phi^X(s,t,x)), & (s,t,x)\in [0,T]^2\times \Omega,  \\
\Phi^X(s,s,x)=x.
\end{cases}
\end{gather}
\end{subequations}
For instance one may assume that $X$ is uniformly Lipschitz on $[0,T]\times \Omega$, to ensure that $\Phi^X$ exists for all $(s,t,x) \in [0,T]^2\times \Omega$.

Let us consider $\gamma_0$ and $\gamma_1$, two smooth Jordan surfaces or curves  homotopic in $\Omega$. We assume that there exists a smooth vector field $X:[0,T]\times \Omega\into \mathbb{R}^N$ such that

\begin{subequations}
\label{eq:champX}
\begin{align}
&X(0,\cdot)\equiv X(T,\cdot) \equiv 0 \\
&X(t,\sigma)=0,\quad \mbox{for all }\, (t,\sigma)\in [0,T]\times \partial \Omega\\
&\Phi^X(0,t,\gamma_{0}):=\Phi^X(0,t,\cdot)(\gamma_0)\subset \Omega\quad\mbox{for all }\, t\in[0,T] \\
&\Phi^X(0,T,\gamma_{0}):=\Phi^X(0,T,\cdot)(\gamma_0)=\gamma_1\label{eq:champX3}\\
&{\rm div}(X(t,\cdot))=0\; \mbox{ in }\,\Omega,\,\mbox{for all }\, t\in [0,T].
\end{align}
\end{subequations}

Given a parameterization of $\gamma_0$, equality \eqref{eq:champX3} means that the image of $\gamma_{0}$ by $\Phi^X(0,T,\cdot)$ is a parameterization of $\gamma_1$.

In \cite{GlHo08} and \cite{GlHo10},  O.~Glass \& Th.~Horsin construct explicitly smooth vector fields $X$ satisfying the conditions \eqref{eq:champX} according to the specific assumptions on $\gamma_0$ and $\gamma_1$, which depend on the dimension $N=2$ or $N=3$. Despite having explicit procedures for the construction of the vector fields $X$, from a numerical analysis perspective, it is nevertheless necessary to understand the stability of such procedures.

\medskip 
The remainder of this paper is organized as follows. In Section \ref{sec:Im-dense} we prove Theorem \ref{lem:Im-Dense}, while in Section \ref{sec:Runge} a constructive method, based on Runge's theorem, is presented which applies only in dimension $N=2$.
In Section \ref{sec:Ill-posedness} we give a precise analysis of a Cauchy problem on the boundary for the Laplace operator. In fact we show that the stability constant is of the order $\exp({\rm dist}(\Gamma,\gamma))$, where $\Gamma$ is the region of the boundary on which a control is implemented, and $\gamma$ denotes the boundary of the region which one desires to control. Thus, in order to have a tractable numerical procedure for the Lagrange controllability, it is necessary that the zone $\Gamma$ should be close enough to $\gamma$. 

%%%%%%%%%%%%%%%%%%%%%%%

%%%%%%%%%%%%%%%%%%%%%%%
\section{Approximate controllability in $H^{-1/2}$-norm}\label{sec:Im-dense}

As we mentioned in the introduction, motion of curves in ${\Bbb R}^2$, or surfaces in ${\Bbb R}^3$, is governed by the dynamics of the normal velocity. 
In this chapter we prove first Theorem \ref{lem:Im-Dense} and then we comment (see Remark \ref{rem:2-3} below) how this theorem yields an approximate controllability result for the Euler equation, albeit in a weak sense, that is in $H^{-1/2}$-norm.

It is clear that the solution $\Psi_{v}$ of the system \eqref{eq:PsivDeLapin} exists, is unique, and the mapping $v \mapsto \Psi_{v}$ is continuous from $H^{-1/2}_{m}(\Gamma)$ into $H^1(\Omega)$. Thanks to a result due to J.L.~Lions (see \cite[chapitre VII, \S~5]{62Lions}), $\Lambda_{\gamma}(v):=\nabla\Psi_{v}\cdot {\bf n}_{12}$ is well defined on $\gamma$, and the operator $\Lambda_{\gamma}$ is continuous from $H^{-1/2}_{m}(\Gamma)$ into $H^{-1/2}_{m}(\gamma)$.
\medskip

Let us recall that $H^{1/2}_m(\gamma)\hookrightarrow L^2_m(\gamma)\hookrightarrow (H^{1/2}_m(\gamma))'$  with dense and compact imbeddings where we have set
$$ L^2_m(\gamma):=\left\{\phi \in L^2(\gamma),\, \int_\gamma \phi d\sigma=0\right\},$$
and as a matter of fact one has
$$\left(H^{1/2}_m(\gamma)\right)'=H^{-1/2}_m(\gamma).$$

In order to prove Theorem \ref{lem:Im-Dense}, we will introduce the following operators $\Lambda_{i}$ for $i=1,2$, defined through the resolution of an appropriate partial differential equation in $\Omega_{i}$ (recall that we have set $\Omega_2:=\inside(\gamma)$, the {\it inside\/} of $\gamma$, and that $\Omega_1:=\Omega\setminus\overline{\Omega_2}$). The operator $\Lambda_{1}$ is a Poincaré-Steklov type operator (also called a Neumann-to-Dirichlet operator), and is given by
\begin{equation}\label{eq:Def-Lambda-1}
\begin{split}
\Lambda_{1}:H^{-1/2}_m(\gamma)&\into H^{1/2}_m(\gamma)\\
\psi &\mapsto \xi(\psi)_{|\gamma}
\end{split}
\end{equation}
where $\xi := \xi(\psi)\in H^1(\Omega_1)$ is the unique solution of
\begin{subnumcases}{\label{eq:deftheta1} ~}
-\Delta \xi = 0 \quad \mbox{in }\, \Omega_1,\\
\dfrac{\partial \xi}{\partial {\bf n}_{12}} = \psi \quad \mbox{on }\,\gamma,\label{eq:theta1vautvsurgamma}\\
\dfrac{\partial \xi}{\partial {\bf n}} = 0 \quad \mbox{on }\,\partial \Omega 
\label{eq:dernormtheta1surGamma}\\
\int_\gamma \xi(\sigma)\,d\sigma = 0.
\end{subnumcases}
The operator $\Lambda_{2}$ is also a Poincaré-Steklov type operator, and is given by
\begin{equation}\label{eq:Def-Lambda-2}
\begin{split}
\Lambda_{2}:H^{1/2}_m(\gamma)&\into H^{-1/2}_m(\gamma)\\
\phi&\mapsto \dfrac{\partial \zeta(\phi)}{\partial {\bf n}_{21}} := \nabla \zeta(\phi)\cdot {{\bf n}_{21}}_{|\gamma}
\end{split}
\end{equation}
where $\zeta := \zeta(\phi)\in H^1(\Omega_2)$ is the unique solution of
\begin{equation}\label{eq:deftheta2}
\begin{cases}
-\Delta \zeta = 0 \quad\mbox{in }\,\Omega_2,\\
\zeta = \phi \quad\mbox{on }\,\gamma.
\end{cases}
\end{equation}
It is readily seen that the operators $\Lambda_{1} : H^{-1/2}_{m}(\gamma) \into H^{1/2}_{m}(\gamma)$ and $\Lambda_{2} : H^{1/2}_{m}(\gamma) \into H^{-1/2}_{m}(\gamma)$ are bounded and self-adjoint operators, and that with the above notations we have, for $\psi \in H^{-1/2}_{m}(\gamma)$ and $\phi \in H^{1/2}_{m}(\gamma)$,
%\begin{equation}
\begin{eqnarray}
\dual{\psi}{\Lambda_{1}\psi}{\gamma} = \int_{\Omega_{1}}|\nabla\xi(x)|^2\,dx, \label{eq:Rel-Op-Lambda-1}  \\
\dual{\Lambda_{2}\phi}{\phi}{\gamma} = \int_{\Omega_{2}}|\nabla\zeta(x)|^2\,dx.
\label{eq:Rel-Op-Lambda-2}
\end{eqnarray}

In the course of our proof of Theorem \ref{lem:Im-Dense} we shall need to show that the operator $T_{12} := I + \Lambda_{1}\Lambda_{2} $ is a homeomorphism on the space $H^{1/2}_{m}(\gamma)$. 
To this end we recall the following result: if $A$ and $B$ are two $n\times n$ self-adjoint semi-definite matrices, it is a well known result that all the eigenvalues of the matrix $AB$ are real and nonnegative, and as a consequence for any $\lambda > 0$ the matrix $I + \lambda AB$ is invertible. An analogous result holds for operators in acting in a Hilbert space, as stated in the following lemma.

\begin{lemma}\label{lem-AB-accr}
Let $H$ be a Hilbert space, with a scalar product and norm denoted respectively by $(\cdot|\cdot)$ and $\|\cdot\|$, and two bounded nonnegative (in the sense of forms) selfadjoint operators $A$ and $B$ defined in $H$. Then for any $\lambda > 0$ the operator $I + \lambda AB$ is invertible and has a bounded inverse.
\end{lemma}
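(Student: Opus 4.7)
My strategy is to reduce the problem to the spectrum of an auxiliary self-adjoint operator. Specifically, I would first prove that $\sigma(AB)\subset[0,+\infty)$; this immediately implies $\sigma(I+\lambda AB)\subset[1,+\infty)$ for every $\lambda>0$, so $0\notin\sigma(I+\lambda AB)$, which is precisely the invertibility of $I+\lambda AB$. The boundedness of the inverse is then automatic from the bounded inverse theorem.

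To locate $\sigma(AB)$ in $[0,+\infty)$, I would introduce the nonnegative self-adjoint square root $S:=A^{1/2}$, furnished by the continuous functional calculus applied to $A$. Writing $AB=S\cdot(SB)$, one then observes that
\[
(SB)\cdot S \;=\; SBS
\]
is symmetric (since $S=S^{*}$) and nonnegative, because $(SBS\,x\,|\,x)=(B\,Sx\,|\,Sx)\geq 0$ for every $x\in H$. In particular $SBS$ is a bounded nonnegative self-adjoint operator, so $\sigma(SBS)\subset[0,+\infty)$.

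The crucial link is the classical spectral identity
\[
\sigma(UV)\setminus\{0\}\;=\;\sigma(VU)\setminus\{0\},
\]
valid for any bounded operators $U,V$ on a Banach space, whose proof reduces to the purely algebraic formula $(\mu I-UV)^{-1}=\mu^{-1}\bigl(I+U(\mu I-VU)^{-1}V\bigr)$ for $\mu\neq 0$. Applied with $U=S$ and $V=SB$, this yields
\[
\sigma(AB)\setminus\{0\}\;=\;\sigma(SBS)\setminus\{0\}\;\subset\;[0,+\infty),
\]
and since $0$ itself lies in $[0,+\infty)$, one obtains $\sigma(AB)\subset[0,+\infty)$, completing the proof.

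The main potential obstacle, if one prefers to avoid the above spectral identity, is establishing the closed range of $I+\lambda AB$. Injectivity alone is straightforward: pairing the equation $x=-\lambda ABx$ with $Bx$ gives $(B^{1/2}x\,|\,B^{1/2}x)+\lambda(A^{1/2}Bx\,|\,A^{1/2}Bx)=0$, whence $Bx=0$ and thus $x=0$; the same reasoning applied to the adjoint $(I+\lambda AB)^{*}=I+\lambda BA$ delivers density of the range. However, closed range does not follow from the naive estimate $\|(I+\lambda AB)x\|^{2}\geq\|x\|^{2}+2\lambda\,\mathrm{Re}(ABx\,|\,x)$, because the symmetric part $\tfrac12(AB+BA)$ need not be nonnegative when $A$ and $B$ fail to commute. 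This is precisely why the detour through the self-adjoint operator $SBS$ appears to be the most efficient route.
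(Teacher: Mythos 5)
Your argument is correct, but it takes a genuinely different route from the paper's. The paper works directly with the geometry of the operator: it first shows $N(I+\lambda AB)=\{0\}$ by pairing $u+\lambda ABu=0$ against $Bu$ (essentially your injectivity remark), then proves that $R(I+\lambda AB)$ is \emph{closed} by showing that if $(I+\lambda AB)u_n$ is Cauchy then so is $(B^{1/2}u_n)_n$ --- the key tool being the Cauchy--Schwarz inequality $|(Bu|v)|\le (Bu|u)^{1/2}(Bv|v)^{1/2}$ for the semi-definite form associated with $B$ --- whence $ABu_n=AB^{1/2}\bigl(B^{1/2}u_n\bigr)$ converges and so does $u_n$; surjectivity then comes from Banach's closed range theorem via $R(I+\lambda AB)=N(I+\lambda BA)^{\perp}=H$, and boundedness of the inverse from the closed graph theorem. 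You replace the entire closed-range step by a spectral localization: writing $AB=S(SB)$ with $S=A^{1/2}$ and invoking the identity $\sigma(UV)\setminus\{0\}=\sigma(VU)\setminus\{0\}$ to transfer the inclusion $\sigma(SBS)\subset[0,+\infty)$ (valid since $SBS$ is bounded, self-adjoint and nonnegative) to $\sigma(AB)\subset[0,+\infty)$, after which $-1/\lambda\notin\sigma(AB)$ settles everything. Your route is shorter and strictly stronger --- it yields invertibility of $I+\lambda AB$ for every $\lambda\in\mathbb{C}\setminus(-\infty,0)$ together with resolvent bounds, not just for $\lambda>0$ --- at the price of using the product-spectrum identity and the functional calculus; the paper's route stays within elementary Hilbert-space arguments plus the closed range and closed graph theorems. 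Your closing observation is also well taken: the naive coercivity estimate fails precisely because $\tfrac12(AB+BA)$ need not be nonnegative for noncommuting $A,B$, which is why the paper's closed-range step has to go through the auxiliary quantity $\|B^{1/2}v\|$ rather than $\|v\|$ itself.
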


\begin{proof}
We are going to verify that the kernel $N(I + \lambda AB) = \{0\}$ and that the range $R(I + \lambda AB)$ is closed. Recall that since $B = B^*$ is nonnegative in the sense of forms, in particular we have the Cauchy-Schwarz inequality stating that for any $u,v \in H$,
$$|(Bu|v)| \leq (Bu|u)^{1/2}\, (Bv|v)^{1/2}.$$
In particular note that if $u\in H$ is such that $(Bu|u) = 0$ then $Bu = 0$. If $u\in H$ is such that 
$$u + \lambda AB u = 0,$$
then by taking the scalar product of the above with $Bu$, and using the facts that $B$ is self-adjoint, and $A,B$ are nonnegative, we have
$$0 = (Bu|u) + \lambda (ABu|Bu) \geq (Bu|u) \geq 0,$$
yielding that $(Bu|u) = 0$, and thus $Bu = 0$. Since $u + \lambda ABu = 0$, this shows that $u = 0$ and thus $N(I + \lambda AB)= \{0\}$.

We show now that $R(I + \lambda AB)$ is closed. Indeed if $u_{n}, f_{n} \in H$ and $f \in H$ are such that
$$u_{n} + \lambda AB u_{n} = f_{n} \to f \quad \mbox{in }\, H$$
we set $g_{n,k} := f_{n} - f_{k}$ and $v_{n,k} := u_{n} - u_{k}$ so that
$$v_{n,k} +\lambda AB v_{n,k} = g_{n,k}.$$
We may take the scalar product of this equality with $Bv_{n,k}$ and obtain
\begin{align}
\|B^{1/2}v_{n,k}\|^2 = (Bv_{n,k}|v_{n,k}) & \leq
	(Bv_{n,k}|v_{n,k}) + \lambda (ABv_{n,k}|Bv_{n,k}) = (g_{n,k}|Bv_{n,k}) \nonumber\\
	& \leq (Bg_{n,k}|g_{n,k})^{1/2}\|B^{1/2}v_{n,k}\| \leq \|B\|^{1/2}\|g_{n,k}\|\, \|B^{1/2}v_{n,k}\|.\nonumber
\end{align}
We conclude that 
$$\|B^{1/2}u_{n} - B^{1/2}u_{k}\| \leq \|B\|\,\|f_{n} - f_{k}\|,$$
proving that $(B^{1/2}u_{n})_{n}$ is a Cauchy sequence, and therefore the sequence $(ABu_{n})_{n}$ is also a Cauchy sequence, the linear operator $AB^{1/2}$ being continuous. Thus there exists a certain $g \in H$ such that $ABu_{n} \to g$ as $n \to \infty$. Finally, if we set $u := f - \lambda g$, we have that $u_{n} \to u$ as $n\to\infty$, and also $ABu_{n} \to ABu$ and therefore $u + \lambda ABu = f$, that is $u \in R(I +\lambda AB)$, and the range of the operator $I + \lambda AB$ is closed.
\medskip

It is clear that, changing the roles played by $A$ and $B$, we can also see that 
$$N((I + \lambda AB)^*) = N(I + \lambda BA) = \{0\}$$
and that $R((I + \lambda AB)^*) = R(I + \lambda BA)$ is closed. Since by the closed range theorem of S. Banach (see for instance K. Yosida \cite[p. 205, chapter VII, \S 5]{YosidaFA}), we have 
$$R(I + \lambda AB) = N((I + \lambda AB)^*)^\perp = H,$$ 
we conclude that $(I + \lambda AB)$ is one-to-one, that is $(I + \lambda AB)^{-1}$ exists. Thanks for instance to an applications of Banach's closed graph theorem to the mapping $(I + \lambda AB)^{-1}$ (see K. Yosida \cite[p. 79, Theorem 1, chapter II, \S 6]{YosidaFA}), we infer that $I + \lambda AB$ has a bounded inverse and thus it is a homeomorphism of $H$ into itself.
\end{proof}

We are now in a position to prove the following result:
\begin{proposition}\label{prop:T12bij}
The map 
\begin{eqnarray*}T_{12}:H^{1/2}_m(\gamma)&\into &H^{1/2}_m(\gamma)\\
\phi&\mapsto&\phi+\Lambda_{1}\Lambda_{2}\phi
\end{eqnarray*}
is a one-to-one homeomorphism.
\end{proposition}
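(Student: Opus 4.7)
The plan is to rewrite $T_{12} = I + \Lambda_{1}\Lambda_{2}$ in the form $I + AB$ on a single Hilbert space, with $A$ and $B$ bounded, self-adjoint and nonnegative, so that Lemma \ref{lem-AB-accr} applies directly with $\lambda = 1$. I would take $H := H^{1/2}_{m}(\gamma)$ endowed with its natural Hilbert inner product $(\cdot|\cdot)_{H}$, and let $R : H \to H' \cong H^{-1/2}_{m}(\gamma)$ denote the associated Riesz isomorphism, so that $(\phi|\psi)_{H} = \langle R\phi,\psi\rangle$ for all $\phi,\psi \in H$. I then define bounded linear operators on $H$ by
$$A := \Lambda_{1} R, \qquad B := R^{-1} \Lambda_{2},$$
both of which are well-defined because $\Lambda_{2}$ maps $H$ into $H^{-1/2}_{m}(\gamma)$ and $\Lambda_{1}$ maps $H^{-1/2}_{m}(\gamma)$ back into $H$. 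By construction $AB = \Lambda_{1}R R^{-1}\Lambda_{2} = \Lambda_{1}\Lambda_{2}$, so that $T_{12} = I + AB$.

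The next step is to verify that $A$ and $B$ are self-adjoint and nonnegative on $H$. Directly from the definition of the Riesz isomorphism,
$$(B\phi_{1}|\phi_{2})_{H} = \langle \Lambda_{2}\phi_{1}, \phi_{2}\rangle \quad\text{and}\quad (A\phi_{1}|\phi_{2})_{H} = \langle R\phi_{2}, \Lambda_{1} R\phi_{1}\rangle,$$
and by the self-adjointness of $\Lambda_{1}$ and $\Lambda_{2}$ recorded just before Lemma \ref{lem-AB-accr}, both right-hand sides are symmetric in $(\phi_{1},\phi_{2})$, hence $A = A^{*}$ and $B = B^{*}$ on $H$. Nonnegativity of $B$ is exactly \eqref{eq:Rel-Op-Lambda-2} taken with $\phi_{1} = \phi_{2} = \phi$, while nonnegativity of $A$ is \eqref{eq:Rel-Op-Lambda-1} applied to $\psi := R\phi$.

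Once these facts are in place, Lemma \ref{lem-AB-accr} with $\lambda = 1$ yields that $T_{12} = I + AB$ is invertible on $H$ with bounded inverse, that is, $T_{12}$ is a homeomorphism of $H^{1/2}_{m}(\gamma)$ onto itself. I do not expect any serious obstacle here: the argument is essentially an algebraic reduction to the lemma, the only mildly delicate point being to encode the duality between $H^{1/2}_{m}(\gamma)$ and $H^{-1/2}_{m}(\gamma)$ through $R$ so that both factors of the composition live in the same Hilbert space. With this identification fixed, the remaining bookkeeping on which side of the duality bracket a given element sits becomes mechanical.
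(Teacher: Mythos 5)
Your argument is correct and is essentially identical to the paper's own proof: the paper also introduces the Riesz duality isomorphism $J : H^{1/2}_{m}(\gamma) \into H^{-1/2}_{m}(\gamma)$, sets $A := \Lambda_{1}J$ and $B := J^{-1}\Lambda_{2}$, checks self-adjointness and nonnegativity via \eqref{eq:Rel-Op-Lambda-1}--\eqref{eq:Rel-Op-Lambda-2}, and concludes with Lemma \ref{lem-AB-accr}. No gaps; your verification of $A=A^{*}$ and $B=B^{*}$ through the duality bracket matches the paper's computation.
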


\begin{proof}
Let $J := H^{1/2}_{m}(\gamma) \into H^{-1/2}_{m}(\gamma)$ be the duality isomorphism given by F. Riesz' theorem. Then setting $A := \Lambda_{1}J$ and $B := J^{-1}\Lambda_{2}$, it is easily seen that $A$ and $B$ are two self-adjoint, nonnegative and bounded operators on $H^{1/2}_{m}(\gamma)$. 
For instance let us check that $A = A^*$ and is nonnegative. To simplify notations, set $H := H^{1/2}_{m}(\gamma)$ so that $H' = H^{-1/2}_{m}(\gamma)$. The isomorphism $J$ satisfies, for any $\phi_{1},\phi_{2} \in H$ and $\psi_{1},\psi_{2} \in H'$,
$$\langle \psi_{1}, \phi_{1} \rangle_{H',H} = (J^{-1}\psi_{1}|\phi_{1})_{H}, \qquad 
(\phi_{1}|\phi_{2})_{H} = (J\phi_{1}|J\phi_{2})_{H'} = 
\langle J\phi_{1},\phi_{2}\rangle_{H',H}.$$
Thus, for $\phi_{1},\phi_{2} \in H^{1/2}_{m}(\gamma)$, if we set $\psi_{k} := J\phi_{k}$ for $k=1,2$, using the above properties of $J$ and the fact that $\Lambda_{1} : H' \into H$ is selfadjoint, we have
\begin{align}
(\phi_{1}|A\phi_{2})_{H} &= (J^{-1}\psi_{1}|\Lambda_{1}\psi_{2})_{H} = \langle \psi_{1},\Lambda_{1}\psi_{1}\rangle_{H',H} = \langle \psi_{2},\Lambda_{1}\psi_{1} \rangle_{H',H}\nonumber\\
& = (J^{-1}\psi_{2}|\Lambda_{1}\psi_{1})_{H} = (\phi_{2}|\Lambda_{1}\psi_{2})_{H} \nonumber\\
& = (\phi_{2}|A\phi_{1})_{H} = (A\phi_{1}|\phi_{2})_{H}, \nonumber
\end{align}
which means that $A$ is selfadjoint (recall that $A$ is bounded). Setting $\psi := J\phi$, for $\phi \in H$, the fact that $A$ is nonnegative is a consequence of \eqref{eq:Rel-Op-Lambda-1} and the equality 
$$(A\phi|\phi)_{H} = \langle \psi,\Lambda_{1}\psi \rangle_{H',H} = \int_{\Omega_{1}}|\nabla\xi(x)|^2\,dx,$$
where $\xi$ satisfies \eqref{eq:deftheta1}.

It is clear that $T_{12} = I + AB$, and thus applying Lemma \ref{lem-AB-accr} we conclude that $T_{12}$ is a homeomorphism on $H^{1/2}_{m}(\gamma)$.
\end{proof}

\subsection{Proof of Theorem \ref{lem:Im-Dense}}
\begin{proof}
By a result due to S. Banach, it is well known that the closure of the range of $\Lambda_{\gamma}$ is the orthogonal of the kernel of its adjoint $\Lambda_{\gamma}^*$, that is $\overline{R(\Lambda_{\gamma})}= N(\Lambda_{\gamma}^*)^\perp $ (see for instance K. Yosida \cite[p. 205, chapter VII, \S 5]{YosidaFA}). Thus we have to show that $N(\Lambda_{\gamma}^*) = \{0\}$. This will be done in two steps.

\noindent {\bf Step 1.} In this step, we consider the following case. Assume that ${\widetilde \Omega}\subset {\Bbb R}^N$ is a domain such that $\omega \subset \subset {\widetilde \Omega}$ is connected, $\Gamma = \partial\omega$ is smooth, and finally $\Omega = {\widetilde \Omega}\setminus {\bar \omega}$ (see figure 1). 
We have to show that $\Lambda_{\gamma}^*$ is injective (recall that the operator $\Lambda_{\gamma}$ is defined in \eqref{eq:DefLambdaGamma}), and to do so we need to characterize this adjoint operator by establishing a certain representation formula.
\begin{figure}[!ht]
\begin{center}
\resizebox{!}{5cm}{\includegraphics{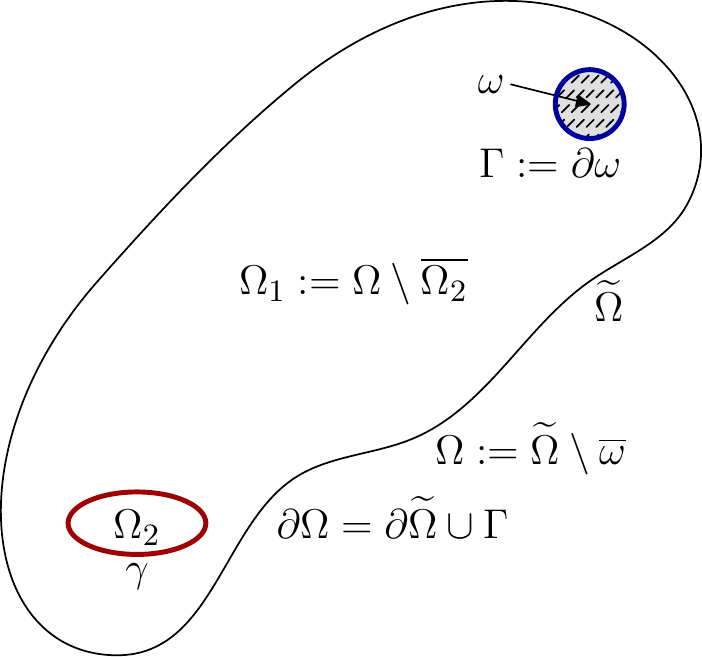}}
\end{center}
\caption{The case of the first step.}
\end{figure}

For $\phi\in H^{1/2}_m(\gamma)$ let us determine $\Lambda_{\gamma}^*(\phi)$. We solve \eqref{eq:deftheta2} and denote its solution by $\zeta := \zeta(\phi)$, and we denote by $\xi := \xi(-\Lambda_{2}(\phi))$ the solution of \eqref{eq:deftheta1} with $v = -\Lambda_{2}(\phi)$.
For this $v\in H^{-1/2}_m(\Gamma)$ given, multiply \eqref{eq:deftheta2} by $\Psi_{v}$ defined in \eqref{eq:PsivDeLapin}, and integrate by parts to obtain successively:
\begin{align}
\langle\Lambda_{\gamma} v,\phi\rangle_{H^{-1/2}(\gamma),H^{1/2}(\gamma)} &= 
- \int_{\Omega_2}\Delta \Psi_v(x) \,\zeta(x)\,dx 
- \int_{\Omega_2}\nabla \Psi_v(x)\cdot \nabla \zeta(x)\, dx  \nonumber\\
& = \displaystyle 
\int_{\Omega_{2}}\Psi_{v}(x)\,\Delta \zeta(x)\, dx 
- \langle\Lambda_{2}\phi,\Psi_{v}\rangle_{H^{-1/2}(\gamma),H^{1/2}(\gamma)}  \nonumber\\
& = 
\dual{\dfrac{\partial \zeta}{\partial{{\bf n}_{12}}}}{\Psi_{v}}{\gamma}   \label{eq:dual1}\\
& = 
\int_{\Omega_{1}}\Delta \xi(x)\, \Psi_{v}(x)\,dx 
+ \int_{\Omega_{1}}\nabla  \xi(x)\cdot \nabla \Psi_{v}(x)\,dx  \nonumber\\
& = 
-\int_{\Omega_1}\xi(x)\,\Delta \Psi_v(x)\,dx 
+ \dual{\dfrac{\partial \Psi_v}{\partial {\bf n}_{12}}}{\xi}{\gamma} \nonumber\\
&\hskip5cm 
+ \dual{v}{\xi}{\Gamma} \nonumber \\
& = 
- \dual{\Lambda_{\gamma}v}{\Lambda_{1}\Lambda_{2}\phi}{\gamma} \nonumber \\
& \hskip4cm
+ \dual{v}{\xi}{\Gamma} \nonumber.
\end{align}
Here, and in the sequel, in the duality bracket $\dual{v}{\xi}{\Gamma}$ one should interpret $\xi$ as being the trace of $\xi \in H^1(\Omega_{1})$ on $\Gamma$. 
We thus deduce that for all $\phi \in H^{1/2}_{m}(\gamma)$ and $v \in H^{-1/2}_{m}(\Gamma)$ we have
\begin{equation}\label{eq:dualvphi}
\dual{\Lambda_{\gamma} v}{\phi + \Lambda_1\Lambda_2 \phi }{\gamma}
= \dual{v}{\xi}{\Gamma} .
\end{equation}
Recall that for any given ${\widetilde \phi} \in H^{1/2}_{m}(\gamma)$, by Proposition \ref{prop:T12bij} there exists a unique $\phi \in H^{1/2}_{m}(\gamma)$ such that ${\widetilde \phi} = \phi +\Lambda_1\Lambda_2(\phi)$.
Setting $\phi := (I + \Lambda_{1}\Lambda_{2})^{-1}\widetilde{\phi}$ and then
$$\zeta := \zeta(\phi) = 
\zeta\left((I + \Lambda_{1}\Lambda_{2})^{-1}\widetilde{\phi}\right)
\qquad\mbox{and}\qquad 
\xi := \xi(-\Lambda_{2}(\phi)) \,,$$ 
we deduce from \eqref{eq:dualvphi}  that for any $v\in H^{-1/2}_{m}(\Gamma)$ and any $\widetilde{\phi} \in H^{1/2}_{m}(\gamma)$, by the very definition of $\Lambda_{\gamma}^* : H^{1/2}_{m}(\gamma) \into H^{1/2}_{m}(\Gamma)$, we have 
\begin{equation}\label{eq:dualvphi2}
\dual{v}{\Lambda_{\gamma}^*{\widetilde \phi}}{\Gamma} =  \dual{\Lambda_\gamma(v)}{{\widetilde \phi}}{\gamma} = \dual{v}{\xi}{\Gamma}. 
\end{equation}
Now to conclude the first step of our proof, assume that ${\widetilde \phi}\in H^{1/2}_m(\gamma)$ is such that $\Lambda_{\gamma}^*{\widetilde \phi} = 0$. Then the above identity \eqref{eq:dualvphi2} implies that for all $v \in H^{-1/2}_{m}(\Gamma)$ we have 
$\dual{v}{\xi}{\Gamma} = 0$.
This means that $\xi \equiv 0$ on $\Gamma$, which, thanks to the unique continuation property for the Laplace operator (see e.g. L. H\"ormander  \cite{HORM90}, theorem 8.6.5) and the fact that $\xi$ satisfies also condition \eqref{eq:dernormtheta1surGamma}, implies that $\xi\equiv 0$ in $\Omega_{1}$. This in turn implies that $\Lambda_{2}(\phi) \equiv 0$ on $\gamma$. However, thanks to \eqref{eq:Rel-Op-Lambda-2} and the fact that by our assumption on $\gamma$ the set $\Omega_{2}$ is a connected open domain, we conclude that $\zeta$ is constant in $\Omega_{2}$. Finally, $\phi$ is a constant on $\gamma$ and, since it has zero mean value there, we infer that $\phi \equiv  0$ on $\gamma$ and thus ${\widetilde \phi} \equiv 0$, that is $N(\Lambda_{\gamma}^*) = \{0\}$ and $R(\Lambda_{\gamma})$ is dense in $H_{m}^{-1/2}(\gamma)$.
\medskip

\noindent {\bf Step 2.} In this step we assume that the part of the boundary $\Gamma \subset \partial\Omega$, on which the Lagrangian control is applied, is as in figure 2. More precisely, we extend the domain $\Omega$ into a strictly larger domain ${\widetilde \Omega}$ in such a way that some relatively open part $\Gamma_{0}$ of $\Gamma$ lies in ${\widetilde \Omega}$. Now we consider a small ball $\omega \subset \subset {\widetilde \Omega} \setminus {\overline \Omega}$, and we set ${\widetilde \Omega}_{0} := {\widetilde \Omega} \setminus {\overline \omega}$ and ${\widetilde \Gamma}_{0} := \partial\omega$.

\begin{figure}[ht]
\begin{center}
\resizebox{!}{7cm}{\includegraphics{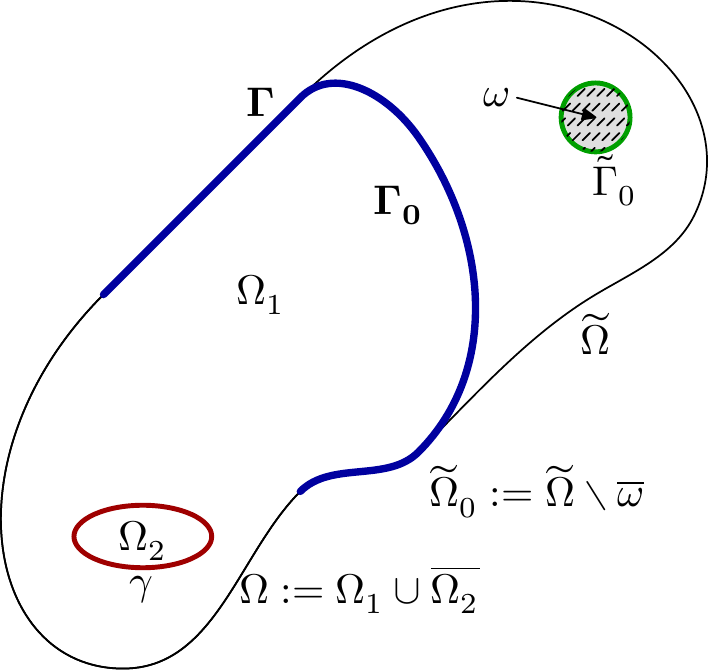}}
\end{center}
\caption{The case of the second step.}
\end{figure}

In the domain ${\widetilde\Omega}_{0}$ we may apply the result of the above Step~1: 
denote by $\Lambda_{0,\gamma}$ the mapping defined by
\begin{eqnarray*}
 \Lambda_{0,\gamma} : H^{-1/2}_m({\widetilde\Gamma}_{0}) & \into & H^{-1/2}_m(\gamma)\\
v_{0} & \longmapsto & \Lambda_{0,\gamma}(v_{0}):=\nabla\Psi_{0,v_{0}}\cdot{\bf n}_{12|\gamma}\nonumber
\end{eqnarray*}
where $\Psi_{0,v_{0}}$ has a mean value equal to zero on $\Omega$, and sastisfies 
$$-\Delta\Psi_{0,v_{0}} = 0 \quad 
\mbox{in }\; {\widetilde\Omega}_{0},
\qquad 
\nabla \Psi_{0,v_{0}} \cdot {\bf n} = v_{0}\; \mbox{on }\, {\widetilde\Gamma}_{0},
\quad \mbox{and }\; 
\nabla \Psi_{0,v_{0}} \cdot {\bf n} = 0\; \mbox{on }\, \partial{\widetilde\Omega},
$$
Then, according to what we have proved in Step 1, we know that $R(\Lambda_{0,\gamma})$ is dense in $H^{-1/2}_{m}(\gamma)$. Now we point out that if we set $v:=\nabla\Psi_{v}\cdot {\bf n}$ on $\partial\Omega$, and $\Psi_{v}:= (\Psi_{0,v_{0}})_{|\Omega}$ the restriction of $\Psi_{0,v_{0}}$ to $\Omega$, then $\Omega$ being smooth, we have $\Psi_{v}\in H^1(\Omega)$ and $-\Delta \Psi_{v} = 0$ in $\Omega$. Thus the mapping $v \mapsto \nabla\Psi_{v}\cdot {\bf n}_{12}$ corresponds to the mapping $v \mapsto \Lambda_{\gamma}(v)$, and we see that $R(\Lambda_{0,\gamma}) \subset R(\Lambda_{\gamma})$ (note that $\Psi_{v}$ being harmonic in $\Omega$, this implies that $v \in H^{-1/2}_{m}(\Gamma_{0})$): from this we infer that $R(\Lambda_{\gamma})$ is dense in $H^{-1/2}_{m}(\gamma)$, and the proof of Theorem \ref{lem:Im-Dense} is complete.
\end{proof}

\begin{remark}\label{rem:2-3}
Since for any $v\in H^{-1/2}_m(\Gamma)$, $\Psi_v$ defined by \eqref{eq:Psivharm} is harmonic in $\Omega$, we have that $\Lambda_{\gamma}(v)$ is as smooth as the manifold $\gamma$ itself. Thus $\Lambda_{\gamma}$ cannot be surjective. 
This means that in general it is not possible to have an exact Lagrangian controllability. However, in order to decsribe the process which allows us to deduce the approximate Lagrangian controllability from the density result of Theorem \ref{lem:Im-Dense}, we refer to \cite[section 2.2]{GlHo10}.
\qed
\end{remark}
%%%%%%%%%%%%%%%%%%%%%%%

%%%%%%%%%%%%%%%%%%%%%%%
\section{Specificity of the dimension 2}\label{sec:Runge}

\noindent Before dealing with an interpretation of the instabilities inherent to the problem under investigation, in this section we present some specific comments on the dimension 2. We refer to the paper \cite{GlHo08} by O. Glass and Th.\ Horsin for a thorough presentation of this approach. 

Let $\Omega \subset {\Bbb R}^2$ a domain, and let $f:\Omega\to \mathbb{C}$ be a complex valued function. We write $f = f_1 + {\rm i} f_2$ where $f_1$ and $f_2$ are real valued functions.
First we recall that $f$ satisfies the Cauchy-Riemann equations, or equivalently $f$ is holomorphic in $\Omega$ (see for instance W.~Rudin \cite{Rudin}), if and only if the vector 
$$V_{f}:=\begin{pmatrix}f_1\\ -f_2 \end{pmatrix}$$ 
satisfies the two conditions ${\rm div}(V_{f}) = \curl(V_{f}) = 0$.

Now, if $f$ satisfies the Cauchy-Riemann equation in $\Omega$, since $\curl (V_{f}) = 0$ in $\Omega$, then the vector valued function $V_{f} : \Omega \into {\Bbb R}^2$ is the gradient of some function $\Phi$ in $\Omega$, and thus finally, as $V_{f}$ satisfies ${\rm div}(V_{f}) = 0$ in $\Omega$,  then we conclude that $\Phi$ is a harmonic function defined in $\Omega$ and $V_{f} = \nabla\Phi$. The aim of this section is to give a constructive approximation of $f$ through the classical Runge's theorem, and thus to obtain an approximation procedure for $\nabla\Phi$.

With the notations introduced in Section \ref{sec:Intro} for Theorem \ref{th:lagrcontrpot}, in this section we will moreover assume that $T = 1$, and that the curve $\gamma_0$, as well as the maps $x \mapsto X(t,x)$ for each $t\in [0,1]$ are smooth, more precisely we assume that $\gamma_0\in C^\omega(\mathbb{S}^1,\mathbb{C})$ and $X\in C^\infty_0([0,T],C^\omega(\Omega)\cap C^\infty_0(\overline{\Omega}))$.

Throughout this section we will denote $\gamma_t:=\Phi^X(0,t,\gamma_0)$, that is the image of $\gamma_{0}$ under the flow of the vector field $X$. 

In this situation the proof of Theorem \ref{th:lagrcontrpot} relies on a compactness argument in time and the use of an appropriate version of the Cauchy-Kowalevsky's theorem (see \cite[Theorem 5.7.1']{Mo08}   ) for a precise statement) on one hand, and the Runge's approximation theorem, on the other hand. 

More precisely, it is shown that, for some integer $m\in \mathbb{N}^*$, there exists a finite sequence of times $t_0:=0< t_1 < ...< t_m< t_{m+1}= T =1$, and $m$ functions $\rho_i\in C^\infty_0(]t_{i-1},t_{i+1}[,[0,1])$ and $m$ functions $\phi_i$ harmonic on $\Omega$, satisfying conditions \eqref{eq:lagrcontpot.2} and \eqref{eq:lagrcontpot.3} at $t=t_i$, as well as \eqref{eq:deriveenormaldephiprochedeX}, such that
\begin{equation}
\phi(t,x):=\sum_{i=1}^m\rho_i(t)\phi_i(x),\label{eq:formdupot}
\end{equation} 
satisfies \eqref{eq:flotdansOmegabis} and \eqref{eq:controlagpotok}.

As the above definition \eqref{eq:formdupot} suggests, in order to prove the approximate lagrangian controllability, an option is to approximate the functions $\rho_i$ in time and the functions $\phi_i$ in $x$.  

\subsection{Constructing a Runge's approximation}\label{subsec:Runge}

We wish to find an explicit approximation procedure in the following Runge's approximation theorem (see \cite{Rudin} in particular for other remarkable properties deduced from this theorem).

\begin{theorem}Let $\Omega$ be an open subset of $\mathbb{C}$. Let $K$ a compact subset of $\Omega$ and $S \subset {\Bbb C}$ a set which has exactly one point in each connected component of $\mathbb{C}\setminus \Omega$, and $f:\Omega \to \mathbb{C}$ a holomorphic function. Then given any $\eps>0$, there exists a rational function $R$ whose poles are exactly the points of $S$, and moreover $R$ satisfies 
$$||f-R||_{L^\infty(K)}\leq \eps.$$
\end{theorem}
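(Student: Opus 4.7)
The plan is to combine, in the classical fashion, two ingredients: a Cauchy integral representation of $f$ on $K$ via a cycle in $\Omega\setminus K$, which by a Riemann sum approximation yields a rational function whose poles lie in $\Omega\setminus K$; and a \emph{pole-pushing lemma} allowing one to move each such pole, at arbitrarily small cost in the $L^\infty(K)$-norm, to any other point of its connected component of $\mathbb{C}\setminus K$, and in particular to the prescribed point of $S$ sitting in that component.

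The first step is to construct a cycle $\Gamma \subset \Omega\setminus K$ for which
$$ f(z) = \frac{1}{2\pi \mathrm{i}} \int_\Gamma \frac{f(\zeta)}{\zeta - z}\,d\zeta, \qquad z \in K. $$
I would use the classical grid construction: tile a neighbourhood of $K$ by a sufficiently fine square mesh, retain those squares that lie in $\Omega$ and either meet $K$ or lie in a bounded component of $\mathbb{C}\setminus K$ contained in $\Omega$, and let $\Gamma$ be the formal sum of the oriented boundary edges of the retained squares which are not shared by two of them. A direct winding-number computation then shows that $\Gamma$ is a cycle in $\Omega\setminus K$, with winding number $1$ about each point of $K$ and $0$ about each point of $\mathbb{C}\setminus\Omega$; the representation formula follows from Cauchy's theorem for cycles. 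Uniform continuity of $(\zeta,z) \mapsto f(\zeta)/(\zeta-z)$ on the compact set $\Gamma\times K$ then yields, for any given $\delta > 0$, a Riemann sum
$$ R_0(z) = \sum_{j=1}^M \frac{c_j}{\zeta_j - z}, \qquad \zeta_j \in \Gamma, $$
satisfying $\|f - R_0\|_{L^\infty(K)} \le \delta$.

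The crucial step is then the pole-pushing lemma itself: if $a$ and $b$ lie in the same connected component of $\mathbb{C}\setminus K$, every function $(z-a)^{-n}$ can be uniformly approximated on $K$ by rational functions whose only pole is at $b$, with the convention that a ``pole at infinity'' means approximation by polynomials, used only when the component in question is unbounded. The proof is by chaining: for $|a'-a| < \mathrm{dist}(a,K)$, the Taylor series of $(z-a')^{-n}$ in powers of $1/(z-a)$ converges uniformly on $K$, so its truncation furnishes an approximation with pole only at $a$; iterating this along a polygonal path joining $a$ to $b$ inside the component---which has positive distance to $K$ by compactness---completes the step.

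By the way $\Gamma$ has been built, each pole $\zeta_j$ of $R_0$ lies in a component of $\mathbb{C}\setminus K$ meeting $\mathbb{C}\setminus\Omega$, hence containing exactly one point $s_j \in S$, to which the pole is transferred by the lemma; distributing a total error budget $\varepsilon - \delta$ over the $M$ terms yields the sought rational function $R$ with poles in $S$ and $\|f-R\|_{L^\infty(K)}\le \varepsilon$. The main obstacle I expect is the quantitative control in the pole-pushing step: the step size along the chain must be dominated by $\mathrm{dist}(\cdot,K)$, so both the number of iterations and the propagation of errors depend delicately on the geometry of each component of $\mathbb{C}\setminus K$ relative to $K$. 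This is precisely the source of the ill-conditioning that Section~\ref{sec:Ill-posedness} analyses in a related Cauchy problem for the Laplace operator.
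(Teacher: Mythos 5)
The paper offers no proof of this statement---it is quoted verbatim as the classical Runge theorem with a pointer to Rudin---and your argument is exactly the standard proof found there: a Cauchy representation of $f$ over a grid-built cycle in $\Omega\setminus K$, a Riemann-sum discretisation producing a rational function with poles on that cycle, and pole-pushing within each component of $\mathbb{C}\setminus K$ towards the corresponding point of $S$ (or towards infinity when that component is unbounded and misses $S$). The argument is correct, and you deal properly with the one genuinely delicate point, namely the components of $\mathbb{C}\setminus K$ that are wholly contained in $\Omega$ and hence contain no point of $S$, by absorbing them into the union of retained squares so that no pole of the Riemann sum ends up stranded there.
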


Though the proof can be given in a more general settings, in the sequel, for the sake of simplicity and clarity, we will assume that $\Omega$ is connected, and that $\mathbb{C}\setminus \Omega$ has one connected component. 

Let $\mathcal{O} \subset \Omega$ be an open set such that, for some integer $p \geq 0$, the set $\mathbb{C}\setminus \mathcal{O}$ has exactly $(p+1)$ connected components, each of them containing exactly one element of $S$, and verifying 
\begin{equation}\label{eq:defO}
\overline{\mathcal{O}}\subset \Omega,\qquad\mbox{and}\qquad
\forall t\in [0,1],\qquad \Phi^X(0,t,\gamma_0)\subset \mathcal{O}.
\end{equation}
We assume moreover that $X\in C_0^\infty([0,1],C^\infty_0(\Omega)\cap C^\omega(\mathcal{O}))$, and that there exists $\Phi\in C_0^\infty([0,1],C^\omega(\mathcal{O}))$ such that (recall that $X$ is divergence free)
$$\forall t\in [0,1],\qquad \nabla \Phi(t,\cdot)=X(t,\cdot)\; \mbox{ in }\,\mathcal{O}.$$
Let us consider a curve $\widetilde{\gamma}_0$ such that $\gamma_0\subset \inside(\widetilde{\gamma}_0)\subset \mathcal{O}$. For $t\in [0,1]$ if we denote by 
$$\widetilde{\gamma}_t:=\Phi^X(0,t,\widetilde{\gamma}_0),\label{eq:gammatilde}$$ 
then we clearly have
$$\Phi^X(0,t,\gamma_0)=\gamma_t\subset \inside(\widetilde{\gamma}_t).$$
We now define $f:[0,1]\times \Omega\to \mathbb{C}$ by the formula 
\begin{equation}V_{f(t,\cdot)}=\nabla \Phi(t,\cdot),\label{eq:defoff}\end{equation} 
and thus $f(t,\cdot)$ is  holomorphic on a neighborhood of $\gamma_t$. 
We may prove now the following:
\begin{theorem}\label{th:varyingrunge} For any $\eps>0$,
there exists a function $R \in C^\infty([0,1],C^\omega(\mathbb{C}\setminus S))$ such that for any $t\in [0,1]$, the function $z\mapsto R(t,z)$ is a rational function whose poles are exactly the points of $S$ and such that
\begin{equation}\label{eq:Runge-approx}
\sup_{t\in [0,1]}||f(t,\cdot) - R(t,\cdot)||_{L^\infty(\inside(\gamma_t))}\leq \eps.
\end{equation}
\end{theorem}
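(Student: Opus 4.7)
The plan is to freeze the time variable, invoke the stationary Runge theorem at finitely many reference times, and glue the resulting rational approximants with a smooth partition of unity in $t$. Since the gluing is carried out in the $t$ variable only, analyticity in $z$ and the prescribed pole structure are preserved automatically, while smooth dependence on $t$ is transferred from the partition of unity.

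First I would introduce the compact set $K := \bigcup_{t\in[0,1]}\overline{\inside(\gamma_t)}$, which is a compact subset of $\mathcal{O}$: compactness follows from the continuity of the flow $\Phi^X$, a homeomorphism carrying $\overline{\inside(\gamma_0)}$ onto $\overline{\inside(\gamma_t)}$. Since $f\in C_0^\infty\bigl([0,1],C^\omega(\mathcal{O})\bigr)$, the map $t\mapsto f(t,\cdot)\in C(K)$ is uniformly continuous, so given a tolerance $\eta>0$ (to be fixed at the end) and any $s\in[0,1]$ there is $\delta_s>0$ with $\|f(t,\cdot)-f(s,\cdot)\|_{L^\infty(K)}\leq \eta$ for $|t-s|<\delta_s$. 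Applying the stationary Runge theorem with ambient open set $\mathcal{O}$, compact $K$ and pole set $S$, I obtain a rational function $R_s$ with poles in $S$ satisfying $\|f(s,\cdot)-R_s\|_{L^\infty(K)}\leq \eta$. Compactness of $[0,1]$ then yields a finite subcover by intervals $(s_j-\delta_j,s_j+\delta_j)$, $j=1,\dots,N$, together with a subordinate smooth partition of unity $\{\chi_j\}_{j=1}^{N}$; the candidate is
\[
R(t,z) := \sum_{j=1}^{N}\chi_j(t)\,R_{s_j}(z) \;+\; \kappa\sum_{s\in S}\frac{1}{z-s},
\]
where the $t$-independent corrective term, with $\kappa>0$ chosen small, guarantees that each point of $S$ appears as an actual pole of $R(t,\cdot)$ for every $t$. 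By construction $R\in C^\infty\bigl([0,1],C^\omega(\mathbb{C}\setminus S)\bigr)$, and for every $t$ the function $R(t,\cdot)$ is rational with poles exactly the points of $S$.

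Finally, for $z\in\overline{\inside(\gamma_t)}\subset K$ the convexity of $\{\chi_j(t)\}_j$ gives
\[
|f(t,z)-R(t,z)| \leq \sum_{j=1}^{N}\chi_j(t)\,|f(t,z)-R_{s_j}(z)| + \frac{\kappa(p+1)}{\mathrm{dist}(S,K)},
\]
and on the support of each $\chi_j$ the triangle inequality yields $|f(t,z)-R_{s_j}(z)|\leq 2\eta$. Choosing $\eta=\epsilon/4$ and $\kappa$ so small that the corrective term contributes at most $\epsilon/2$ delivers the announced estimate \eqref{eq:Runge-approx}. The step I expect to require the most care is precisely this gluing: one must confirm that inserting stationary rational approximants into a smooth $t$-partition of unity preserves analyticity in $z$ (immediate, since each $R_{s_j}$ is $t$-independent) and that the pole set is exactly $S$ (handled by the $\kappa$-correction, together with the observation that the residues of $R(t,\cdot)$ at each $s\in S$ depend continuously on $t$ over the compact $[0,1]$, so that $\kappa$ can be chosen to avoid the finitely many exceptional values). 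Everything else reduces to a finite-dimensional smoothing-in-the-parameter argument layered on top of the classical stationary Runge theorem.
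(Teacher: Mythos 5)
Your argument is correct and follows essentially the same route as the paper's proof: a compactness and uniform-continuity argument in $t$ on a fixed compact $K\supset\bigcup_{t}\overline{\inside(\gamma_t)}$, the stationary Runge theorem at finitely many reference times, and a smooth partition of unity in $t$ to glue the resulting rational approximants. Your only addition is the $\kappa$-corrective term guaranteeing that the poles of the convex combination are \emph{exactly} $S$ --- a point the paper's proof passes over in silence --- and this is a reasonable refinement, though note that the exceptional values of $\kappa$ form a finite union of compact intervals (continuous images of $[0,1]$) rather than a finite set, and that the term $1/(z-s)$ must be replaced by a polynomial contribution if the point of $S$ assigned to the unbounded component of $\mathbb{C}\setminus\Omega$ is taken at infinity.
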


\begin{proof} For the sake of simplicity, we give the proof only in the case when $\mathbb{C} \setminus \Omega$ has one connected component. The reader will be convinced that through easy modifications the proof can be carried out in the general case. 

Let us choose $K$, a compact subset of $\Omega$, such that 
$$\overline{\bigcup_{t\in [0,1]}\inside(\gamma_t)}\subset \mbox{Int}(K).$$ 
Thanks to the compactness of $[0,1]$ and the continuity of $f$, for a given $\eps > 0$, there exist an integer $n \geq 1$, a positive number $\kappa$, and a sequence $0<  t_1 < \cdots < t_n <1$ such that $[0,1]=\cup_{j=0}^n((t_j-\kappa,t_j+\kappa)\cap [0,1])$ and 
$$\forall\, t \in\, (t_j-\kappa,t_j+\kappa)\cap [0,1],\qquad \sup_{z\in K}|f(t,z)-f(t_j,z)|\leq {\eps \over 2}.$$
Choose $(\phi_j)_{1\leq j \leq n}$ a partition of unity such that $\mbox{supp}(\phi_j)\subset (t_j-\kappa,t_j+\kappa)\cap [0,1]$, and 
also for $1 \leq j \leq n$ denote  
$$K_j := \overline{\bigcup_{t\in (t_j-\kappa,t_j+\kappa)\cap [0,1]}\inside(\gamma_t)}.$$
Now, thanks to Runge's theorem there exists $R_j$ a rational function whose poles are exactly the points of $S$ such that 
$$\sup_{z\in K_j}|f(t_j,z)-R_j(z)|\leq {\eps \over 2}.$$
At this point it is clear that if we set 
$$R(t,z) := \sum_{j=1}^n\phi_j(t)R_j(z),$$
then by construction $R$ satisfies \eqref{eq:Runge-approx}.
\end{proof}

\begin{remark}
As a matter of fact, it is possible to give an explicit construction of $R$. Indeed, first, the partition of unity $(\phi_j)_{1\leq j \leq n}$ can be constructed by means of the well-known function 
$$x\mapsto \Psi(x)\Psi(1-x)$$ where 
$$\Psi(x)=\int_{-\infty}^x\psi(t)dt,$$ with 
$$\psi(x)=\begin{cases}0\quad \mbox{ if }x<0,\cr e^{-1/x}\quad\mbox{ otherwise}.\end{cases}$$
Next, with our assumptions on $\Omega$, we can give an explicit function $R_j$. Indeed such an explicit construction is given, for example in \cite{sarasond}.\qed
\end{remark}

\subsection{Application to the controllability problem} 

We now explain how we apply this approximation to the Lagrangian controllability by means of harmonic flows and in particular how we deal with condition \eqref{eq:euler.4}.
For simplicity, we will assume that $\mathbb{C}\setminus \Omega$ has only one connected component. 
%We consider $f$ as in the preceding Subsection \ref{subsec:Runge}, and denote by $R$ a Runge's approximation given by theorem \ref{th:varyingrunge}.

Consider a simply connected open neighborhood $\mathcal{U}$ of $\mathcal{O}$ (recall that $\mathcal O$ is defined by \eqref{eq:defO}) such that
$\overline{\mathcal{O}}\subset \mathcal{U}\subset \overline{\mathcal{U}}\subset \Omega$, and denote by $\mathcal{V}$ a simply connected neighborhood of $\partial \Omega\setminus \Gamma$ such that $\mathcal{V}\cap \mathcal{U}=\emptyset$.

Let us recall the Mergelyan's theorem (see \cite{Rudin})
\begin{theorem}\label{th:mergelyan}Let $O$ be a relatively compact open set of $\mathbb{C}$ such that $\mathbb{C}\setminus O$ is connected and consider $h$ a continuous map defined in $\overline{O}$, holomorphic in $O$. Then for any $\eps>0$ there exists a polynomial $P$ such that
$$\forall z\in \overline{O},\quad |P(z)-h(z)|\leq \eps.$$
\end{theorem}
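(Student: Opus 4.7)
The plan is to follow the classical constructive route to Mergelyan's theorem, combining the Cauchy--Pompeiu (Cauchy--Green) integral formula with Runge's theorem already recalled above. The overall strategy is: replace the continuous-up-to-the-boundary, interior-holomorphic function $h$ by a smooth compactly supported extension, represent this smooth extension by a Cauchy--Green integral whose $\bar\partial$-density vanishes on the interior of $\overline O$, approximate that integral by a finite Riemann sum of translates of $1/(\zeta-z)$, and finally convert those rational functions to polynomials by invoking Runge's theorem (which applies because $\mathbb{C}\setminus O$ is connected, and by a standard exhaustion so is $\mathbb{C}$ minus any neighborhood of $\overline O$ of the right shape).

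First I would extend $h$: by Tietze's extension theorem one obtains $\widetilde h \in C(\mathbb C)$ with compact support, coinciding with $h$ on $\overline O$ and with uniform modulus of continuity controlled by that of $h$. Convolving with a standard mollifier $\rho_\delta$ produces $h_\delta := \widetilde h * \rho_\delta \in C^\infty_c(\mathbb C)$ with $\|h_\delta-h\|_{L^\infty(\overline O)}\le \eps/3$ for $\delta$ small. Crucially, $\partial h_\delta/\partial\bar z$ is supported in a $\delta$-neighborhood of $\partial O$ (since $h$ is holomorphic in $O$, the convolution is still holomorphic on the set of points whose $\delta$-disk lies inside $O$; and $\widetilde h$ is holomorphic, namely zero, far from $\overline O$).

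Next I would apply the Cauchy--Pompeiu formula
\[
h_\delta(z) \;=\; -\frac{1}{\pi}\int_{\mathbb C} \frac{\partial h_\delta}{\partial\bar\zeta}(\zeta)\,\frac{d\mathcal{L}^2(\zeta)}{\zeta-z},
\qquad z\in \mathbb C,
\]
valid because $h_\delta$ is smooth and compactly supported. The density $\partial h_\delta/\partial\bar\zeta$ is bounded (at this fixed $\delta$) and supported in a fixed compact set $K_\delta$ disjoint from the interior of $O$ at distance $\gtrsim \delta$ from the points where we want to evaluate. Partitioning $K_\delta$ into small cells $Q_j$ of diameter $\eta$ and choosing centers $\zeta_j \in Q_j$, the integral is within $\eps/3$ (in sup norm over $\overline O$) of a finite linear combination
\[
\sum_j c_j\,\frac{1}{\zeta_j-z}, \qquad c_j = -\frac{1}{\pi}\int_{Q_j}\frac{\partial h_\delta}{\partial\bar\zeta}(\zeta)\,d\mathcal{L}^2(\zeta),
\]
uniformly for $z\in\overline O$, provided $\eta$ is small compared to $\mathrm{dist}(K_\delta,\overline O\setminus K_\delta)$ and one moves each pole $\zeta_j$ slightly off $\overline O$. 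This gives a rational function $R(z)$ with poles in $\mathbb C\setminus \overline O$ satisfying $\|R-h\|_{L^\infty(\overline O)}\le 2\eps/3$. Since $\mathbb C\setminus O$ is connected and $\overline O$ is compact with the complement of any slight thickening still connected, Runge's theorem produces a polynomial $P$ with $\|P-R\|_{L^\infty(\overline O)}\le \eps/3$, completing the proof.

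The main obstacle, and the point where Mergelyan's theorem really goes beyond Runge, is the control of the Cauchy--Green density near $\partial O$: the set on which $\partial h_\delta/\partial\bar\zeta\neq 0$ shrinks toward $\partial O$ as $\delta\downarrow 0$, yet we still need the integral $\int |\partial h_\delta/\partial\bar\zeta|/|\zeta-z|\,d\mathcal{L}^2$ to remain small when $z\in\overline O$ is near that support. The standard fix is the quantitative estimate $\|\partial h_\delta/\partial\bar\zeta\|_{L^\infty}\lesssim \omega_h(\delta)/\delta$ combined with the observation that the support has area $O(\delta)$, so the $L^1$-norm of the density stays $O(\omega_h(\delta))$ and the Newtonian potential of this density is of order $\omega_h(\delta)\log(1/\delta)\to 0$; alternatively one uses Vitushkin's localization operator. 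Once this quantitative step is carried out, the cell-discretization and Runge approximation are routine.
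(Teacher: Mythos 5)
The paper offers no proof of this statement at all: it is quoted as the classical Mergelyan theorem with a pointer to Rudin, so the only question is whether your argument stands on its own. It does not, and the failure occurs exactly at the step you yourself flag as ``the main obstacle''. With $\|\bar\partial h_\delta\|_\infty\lesssim \omega_h(\delta)/\delta$ and $\bar\partial h_\delta$ supported near $\partial O$, the Cauchy potential $\int |\bar\partial h_\delta(\zeta)|\,|\zeta-z|^{-1}\,d\mathcal L^2(\zeta)$ evaluated at $z$ close to $\partial O$ is indeed of size $\omega_h(\delta)\log(1/\delta)$ at best, but this quantity does \emph{not} tend to $0$ for a general continuous $h$: the modulus of continuity may decay arbitrarily slowly (take $\omega_h(\delta)\sim 1/\log\log(1/\delta)$, so that $\omega_h(\delta)\log(1/\delta)\to\infty$). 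Your scheme therefore proves the theorem only for Dini--log (e.g.\ H\"older) boundary data, and this is precisely the point where Mergelyan is genuinely deeper than Runge. The classical proof (Rudin, Thm.~20.5) does not estimate the raw Cauchy kernel: for $\zeta$ in each boundary disc $D_j$ of radius $2\delta$ it replaces $(z-\zeta)^{-1}$ by a kernel $Q_j(\zeta,z)=g_j(z)+(\zeta-b_j)g_j(z)^2$, where $g_j$ is built from a Riemann map of the complement of a continuum $E_j\subset D_j\setminus \overline O$ of diameter $\gtrsim\delta$ (this is where the connectedness of $\mathbb C\setminus O$ is used); the Koebe distortion estimates give $|Q_j|\le 100/\delta$ and $|Q_j(\zeta,z)-(z-\zeta)^{-1}|\le C\delta^2|z-\zeta|^{-3}$, and with these two bounds the total error is $O(\omega_h(\delta))$ with no logarithm. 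Some such device (or Vitushkin's localization scheme, which you name but do not carry out) is unavoidable; it is the missing idea here, not a routine quantitative step.

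A second, related gap is the pole placement. The cell centres $\zeta_j$ of your Riemann sum lie in the support of $\bar\partial h_\delta$, hence possibly on or inside $\overline O$, and ``moving each pole slightly off $\overline O$'' is not a uniformly small perturbation: $|(\zeta_j-z)^{-1}-(\zeta_j'-z)^{-1}|\approx|\zeta_j-\zeta_j'|\,|z-\zeta_j|^{-2}$ is unbounded for $z\in\overline O$ near $\zeta_j$. (In the standard proof this problem disappears because $Q_j(\zeta,\cdot)$ is holomorphic off $E_j$, hence on a neighbourhood of $\overline O$.) Two lesser points: the claim that the support of $\bar\partial h_\delta$ has area $O(\delta)$ presupposes a rectifiable boundary, which is not part of the hypotheses; and your final appeal to Runge requires $S^2\setminus\overline O$ (not merely $\mathbb C\setminus O$) to be connected, which does not follow from the stated hypotheses --- though that is arguably a defect of the statement as quoted rather than of your argument.
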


\noindent Now let $f$ be given as in \eqref{eq:defoff} and, for a given $\eps$, let $R$ be the Runge's approximation given by Theorem \ref{th:varyingrunge}.
{
If $h$ is a continuous function defined in $\overline{\Omega}$ such that $h \equiv 1$ on $\mathcal{U}$ and $h \equiv 0$ on $\mathcal{V}$, then using the above Mergelyan's theorem with $O=\mathcal{U}\cup \mathcal{V}$ on which $h$ is holomorphic, there exists a polynomial $P_\eps$ such that 
$$\forall z\in \mathcal{U},\qquad |P_\eps(z)-1|\leq \eps/ \|R\|_{W^{1,\infty}(\mathcal{U)}},$$
and such that 
$$\forall z\in \mathcal{V}, \qquad |P_\eps(z)|\leq \eps/\|R\|_{W^{1,\infty}(\mathcal{V})}.$$

\begin{figure}[!ht]
\begin{center}
\resizebox{!}{5cm}{\includegraphics{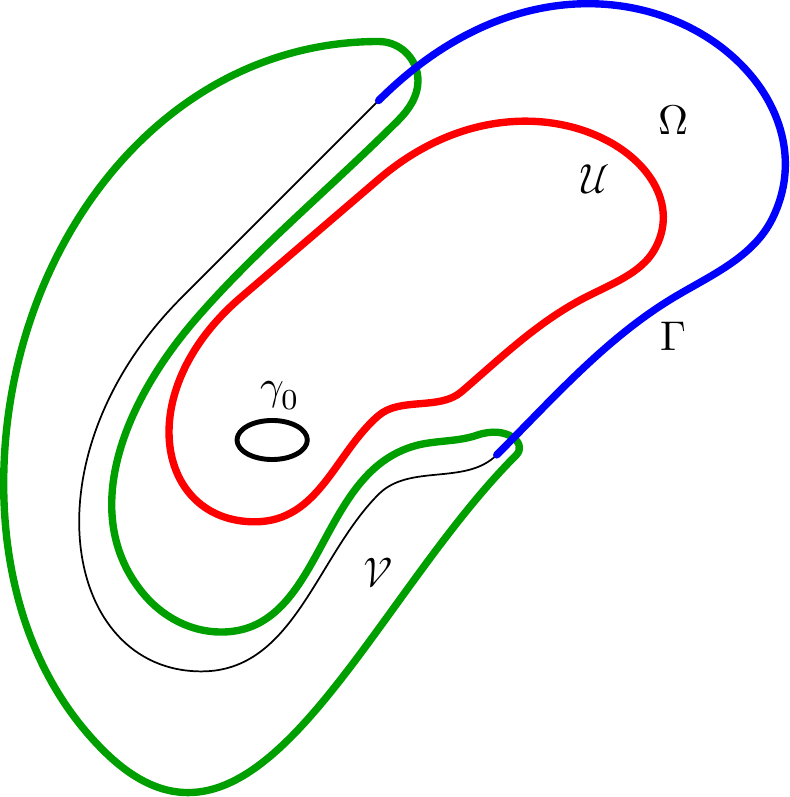}}
\end{center}
\caption{Illustration of the situation in section 3.2.}
\end{figure}

Precisely we have to impose that $\mathbb{C}\setminus O$ is connected. But in fact due to our construction we can assume that $\Gamma$ and $\partial \Omega\setminus \overline{\Gamma}$ are connected (it suffices not to control on the other part of $\Gamma$), and thus we can take $\mathcal U$ connected and simply connected and since $\Omega$ is supposed to be simply connected, we can take $\mathcal V$ connected and simply connected as well.
 
Thus we get 
\begin{align}
\sup_{t\in [0,1]}\|P_\eps R(t,\cdot) -f\|_{L^\infty(\inside({\gamma}_t))} & \leq \sup_{t\in [0,1]}\|P_\eps R(t,\cdot)-R(t,\cdot)\|_{L^\infty(\inside({\gamma}_t))} + \cr
& \qquad\qquad \sup_{t\in [0,1]}\|R(t,\cdot)-f(t,\cdot)\|_{L^\infty(\inside({\gamma}_t))} \cr
&\leq 2\eps\label{eq:estimapprox2d.1}
\end{align} 
and naturally
\begin{equation}\|P_\eps R\|_{L^\infty(\mathcal{V})}\leq \eps.\label{eq:estimapprox2d.2}
\end{equation}
Let us remark that, if we consider an intermediate smooth Jordan curve $\widehat{\gamma}_0$ such that $\gamma_0\subset {\inside(\widehat{\gamma}_0)}\subset \widehat{\gamma}_0\subset {\inside(\widetilde{\gamma}_0)}$, and if we denote 
$\widehat{\gamma}_t:=\Phi^X(0,t,\widehat{\gamma}_0)$, then the preceding approximation can be done by replacing $\gamma_0$ by $\widehat{\gamma}_0$. 
Proceeding analogously, we get the same estimate as  \eqref{eq:estimapprox2d.1}  where $\gamma_t$  is replaced by $\widehat{\gamma}_t$. 

However, since $P_\eps R$ and $f$ are holomorphic and thus harmonic where defined, by standard elliptic estimates, for some constant $C > 0$ we have
\begin{equation}
\sup_{t\in [0,1]}\|P_\eps R(t,\cdot)-f\|_{W^{1,\infty}(\inside(\gamma_t))}\leq C\eps,
\end{equation}
and, by choosing $\widetilde{\mathcal{V}}$ a neighborhood of $\partial \Omega \setminus \Gamma$ such that 
$\overline{\widetilde{\mathcal{V}}}\subset {\mathcal{V}}$, we have (again using elliptic estimates)

\begin{equation}
 \sup_{t\in [0,1]}\|P_\eps R\|_{W^{1,\infty}(\widetilde{\mathcal{V}})}\leq C\eps, \label{eq:fepsaubord}
\end{equation}
where $C$ depends only on $d(\widetilde{\mathcal{V}},\partial \mathcal{V})$ and $\min_{t\in [0,1]}d(\gamma_t,\widehat{\gamma}_t)$ (let us remark that since $\gamma_0\subset \mbox{Int}(\inside(\widehat{\gamma_0}))$ then $\gamma_t\subset \mbox{Int}(\inside(\widehat{\gamma_t}))$ since $\phi^X$ is the flow of $X$, and since by uniqueness of the solution of an ordinary differential equation, a compactness argument implies that we have $\min_{t\in [0,1]}d(\gamma_t,\widehat{\gamma}_t)=\inf_{t\in [0,1]}d(\gamma_t,\widehat{\gamma}_t)>0$).

We denote $f_\eps(t,z):=P_\eps(z) R(t,z)$. Since $\Omega$ is connected, for each $t\in [0,1]$, the vector valued function $V_{f_\eps}$ is the gradient of some harmonic function $\psi_\eps$, but it does not necessarily satisfy \eqref{eq:lagrcontpot.3}.
In order to construct a function which satisfies this condition, let us consider a function $k$ on $[0,1]\times \partial \Omega$ such that $\forall t\in [0,1]$
\begin{align}
&k(t,\cdot) = V_{f_{\eps}}\quad \mbox{on }\, \partial \Omega\setminus \Gamma\cr
&\|k(t,\cdot)\|_{C^{2}(\partial \Omega)}\leq C \| V_{f_\eps}(t,\cdot)\|_{C^2(\partial \Omega \setminus \Gamma)}\cr
&\int_{\partial \Omega}k(t)d\sigma=0,
\end{align}
for some constant $C$ independend of $\eps$ (such a $k$ can be constructed using Urysohn's extension theorem).
For any $t\in [0,1]$, let us now consider $\zeta(t,\cdot)$ harmonic in $\Omega$ such that 
$$\dfrac{\partial \zeta}{\partial {\bf n}}(t,\cdot)=k(t,\cdot) \qquad \mbox{on }\, \partial \Omega, \qquad \mbox{and}\qquad 
\int_{\partial \Omega}\zeta(t,\cdot)d\sigma =0.$$
Thanks to standard elliptic estimates we have, for some constants $C >0$
\begin{equation}\label{eq:estofzeta}
\sup_{t\in [0,1]}\|\zeta(t,.)\|_{C^2(\overline{\Omega})} \leq C\, \|k(t,.)\|_{C^2(\partial \Omega)}\leq C\, \eps.
\end{equation}
Finally, consider a function $\rho\in C^\infty([0,1])$ such that
$$\forall\, t\in [0,1],\quad \rho(t)\in [0,1],\qquad
\rho(0) = \rho(1) =0,$$
and define the new function $\widehat{f}_\eps(t,z) := \rho(t)(f_\eps(t,z) - \zeta(t,z))$, the vector valued function $V_{\widehat{f}_\eps}$ is the gradient of a function that satisfies the conditions of theorem \ref{th:lagrcontrpot} provided that $\rho=1$ on some $[\eta,1-\eta]$ with $\eta>0$ sufficiently small. 

To finish with this section, let us remark that in order to achieve our approximation argument, one has to explain how we can explicitely construct $P_\eps$ and how one can approximate $\zeta$.
For the former, if one closely looks at the proof of the Mergelyan's theorem given in \cite{Rudin}, one sees that it suffices to give an explicit construction of the Runge's approximation which is done in \cite{sarasond}.
For the latter, it suffices to apply a finite element method to approximate $\Upsilon_\eps$.

%%%%%%%%%%%%%%%%%%%%%%%

%%%%%%%%%%%%%%%%%%%%%%%
\section{A precise analysis of ill-posedness}\label{sec:Ill-posedness}

\noindent The analysis undertaken in the previous sections shows the possibility of an approximate Lagrangian control which, in general, cannot be exact. As a matter of fact, this can be interpreted as an issue related to the ill-posedness of the problem consisting in the determination of a harmonic function in a domain $\Omega$ with Cauchy data on some part of its boundary $\partial\Omega$.

Indeed, consider the following problem, which is a simplified version of the Lagrangian control under study in this paper.
Let $\Omega$ be the rectangular domain 
$$\Omega := (0,\pi )\times (0,\ell) \subset
{\Bbb R}^2 \quad\mbox{for some }\, \ell >0, $$
and denote by $\Gamma_{0}$, $\Gamma_{1}$ and $\Gamma$ the following parts of the boundary:
\begin{equation}
\Gamma_0:=[0,\pi ] \times \left\{0 \right\},\qquad \Gamma_1 := [0,\pi ]\times \left\{\ell
\right\},\qquad \Gamma := \partial \Omega \setminus (\Gamma_0 \cup \Gamma_1).
\end{equation}
Moreover, for a given $\ell_{*}$ such that $0 < \ell_{*} < \ell$, consider 
\begin{equation}\label{eq:Gamma-*}
\Gamma_{*} := [0,\pi]\times \left\{\ell_{*}\right\}.
\end{equation} 
The problem we want to analyze is this: for a given $g_{*} \in H^{-1/2}(\Gamma_{*})$ find a Neumann boundary data $g_{0} \in H^{-1/2}(\Gamma_{0})$ such that there exists a harmonic function $u \in H^1(\Omega)$ such that
\begin{equation}\label{eq:Pb-g0-g*} % eq:Pb-g0-g1
-\Delta u = 0\; \mbox{in }\, \Omega, \qquad\mbox{and}\qquad
{\partial u \over\partial{\bf n} } = g_{0}\;\mbox{on }\, \Gamma_{0}\quad \mbox{and }\,
{\partial u \over\partial{\bf n} } = g_{*}\;\mbox{on }\, \Gamma_{*}.
\end{equation}
Note that this problem is similar to the one considered in Theorem \ref{lem:Im-Dense}, but here the target curve $\gamma$ is a simple line which intersects the boundary of $\Omega$. In the limit case when $\ell_{*} = \ell$, the problem would be
\begin{equation}\label{eq:Pb-g0-g1} 
-\Delta u = 0\; \mbox{in }\, \Omega, \qquad\mbox{and}\qquad
{\partial u \over\partial{\bf n} } = g_{0}\;\mbox{on }\, \Gamma_{0}\quad \mbox{and }\,
{\partial u \over\partial{\bf n} } = g_{1}\;\mbox{on }\, \Gamma_{1},
\end{equation}
for which one sees obviously that any $g_{0}\in H^{-1/2}(\Gamma_{0})$ such that 
$$\langle g_{0},1\rangle_{H^{-1/2}(\Gamma_{0}),H^{1/2}(\Gamma_{0})} + 
\langle g_{1},1\rangle_{H^{-1/2}(\Gamma_{1}),H^{1/2}(\Gamma_{1})} = 0$$
yields a function $u\in H^1(\Omega)$ solution to the above equation \eqref{eq:Pb-g0-g1}.
\medskip

To find a $g_{0}$ such that \eqref{eq:Pb-g0-g*} is satisfied, or rather such that the normal derivative $\partial u/\partial{\bf n}$ on $\Gamma_{*}$ is an approximation of $g_{*}$, we first solve an auxiliary boundary value problem, namely for a fixed $f_{0} \in H^{1/2}(\Gamma_{0})$ and $g_{1} \in H^{-1/2}(\Gamma_{1})$ we seek $v \in H^1(\Omega)$ solution to
\begin{equation}\label{eq:Aux-f0-g1}
\begin{cases}
-\Delta v = 0 & \mbox{in }\, \Omega \cr
v = f_0 & \mbox{on }\, \Gamma_0 \cr
\displaystyle {\partial v \over \partial {\bf n} } = g_{1} & \vphantom{{\partial v \over \partial {\bf n} } }\mbox{on }\, \Gamma_{1} \cr
\displaystyle {\partial v \over \partial {\bf n} } = 0 & \vphantom{{\partial v \over \partial {\bf n} } }\mbox{on }\, \Gamma.\cr
\end{cases}
\end{equation}
For later comments, we attract the reader's attention to the fact that at this point we have added a Neumann boundary condition on the part $\Gamma$ of the boundary, where problems \eqref{eq:Pb-g0-g*} or \eqref{eq:Pb-g0-g1}
 do not impose any such restriction. Actually one may consider other boundary conditions on $\Gamma$, but we shall develop this aspect later.

Clearly, for any given pair $(f_{0},g_{1}) \in H^{1/2}(\Gamma_{0}) \times H^{-1/2}(\Gamma_{1})$, equation \eqref{eq:Aux-f0-g1} has a unique solution $v \in H^1(\Omega)$, and if we can find a $(f_{0},g_{1}) \in H^{1/2}(\Gamma_{0}) \times H^{-1/2}(\Gamma_{1})$ such that this solution $v$ satisfies 
$${\partial v \over\partial {\bf n} } = g_{*}\quad
\mbox{on }\,\Gamma_{*}, \qquad \mbox{or }\, \left\|{\partial v \over\partial{\bf n} } - g_{*}\right\|_{H^{-1/2}(\Gamma_{*})} \leq \epsilon,$$
for a certain $\epsilon >0$, then $v$ solves problem \eqref{eq:Pb-g0-g*}, or an approximation of it.
It is also clear that not all $g_{*} \in H^{-1/2}(\Gamma_{*})$ may be attained, and thus again the only hope is to approximate $g_{*}$ with $\partial v/\partial{\bf n}$.
\medskip

We shall describe the solution of \eqref{eq:Aux-f0-g1} in terms of eigenfunctions of two Steklov eigenvalue problems associated to this boundary value problem, and then we shall give necessary and sufficient conditions on the pair $(f_{0},g_{0})$ ensuring the existence and uniqueness of a solution $u\in H^1(\Omega)$ of
\begin{equation}\label{eq:Hadamard}
\begin{cases}
-\Delta u = 0 & \mbox{in }\, \Omega \cr
u = f_0 & \mbox{on }\, \Gamma_0 \cr
\displaystyle {\partial u \over \partial {\bf n} } = g_0 & \vphantom{{\partial u \over \partial {\bf n} } }\mbox{on }\, \Gamma_0\cr
\displaystyle {\partial u \over \partial {\bf n} } = 0 & \vphantom{{\partial u \over \partial {\bf n} } }\mbox{on }\, \Gamma.\cr
\end{cases}
\end{equation}
For $j = 0$ or $j = 1$, the Steklov eigenfunctions $\psi_{j,k}$ are defined as solutions to
\begin{equation}\label{eq:Steklov-j}
\begin{cases}
-\Delta \psi_{j,k} = 0 & \mbox{in }\, \Omega \cr
\displaystyle {\partial \psi_{j,k} \over \partial {\bf n} } = \mu_{j,k}\psi_{j,k} & \vphantom{{\partial u \over \partial {\bf n} } }\mbox{on }\, \Gamma_j\cr
(1 - j)\displaystyle {\partial \psi_{j,k} \over \partial {\bf n} }
+ j \psi_{j,k} = 0 & 
\vphantom{{\partial v \over \partial {\bf n} } }
\mbox{on }\, \Gamma_{1-j} \cr
\displaystyle {\partial \psi_{j,k} \over \partial {\bf n} } = 0 & \vphantom{{\partial v \over \partial {\bf n} } }\mbox{on }\, \Gamma \cr
\end{cases}
\end{equation}
with the normalization
$\int_{\Gamma_{j}}\psi_{j,k}(\sigma)\psi_{j,k'}(\sigma)\,d\sigma = \delta_{k'k}$, so that the Steklov eigenfunctions $(\psi_{j,k})_{k \geq 0}$ form a Hilbert basis of $L^2(\Gamma_{j})$.
One checks easily that 
\begin{equation}
\psi_{0,0}(x,y) := {1 \over \sqrt{\pi}}, \qquad
\psi_{1,0}(x,y) := {1 \over \ell\sqrt{\pi}}\, y, 
\end{equation}
while for $k \geq 1$ we have
\begin{equation}\label{eq:psi-0k}
\psi_{0,k}(x,y) := \sqrt{{2 \over \pi}}\,\cos(kx){\cosh(k(\ell-y)) \over \cosh(k\ell)}\, ,
\end{equation}
and
\begin{equation}\label{eq:psi-1k}
\psi_{1,k}(x,y) := \sqrt{{2 \over \pi}}\,\cos(kx){\sinh(ky) \over \sinh(k\ell)}\, .
\end{equation}
The eigenvalues $\mu_{j,k}$ are given by $\mu_{0,0} := 0$, and $\mu_{1,0} := 1/\ell$, while
\begin{equation}\label{eq:Steklov-Eigen}
\mu_{0,k} := k\,\tanh(k\ell), \qquad
\mu_{1,k} := k\,\cotanh(k\ell)\qquad \mbox{for }\, k \geq 1.
\end{equation}
If $(f_{0},g_{1}) \in H^{1/2}(\Gamma_{0})\times H^{-1/2}(\Gamma_{1})$, we shall denote for $k \geq 0$ 
$$f_{0,k} := \int_{0}^\pi f_{0}(x)\psi_{0,k}(x,0)\,dx,\qquad
g_{1,k} := \int_{0}^\pi g_{1}(x)\psi_{1,k}(x,\ell)\,dx.
$$
Note that for the definition of $g_{1,k}$ we should have written duality brackets between $H^{-1/2}(\Gamma_{1})$ and $H^{1/2}(\Gamma_{1})$, instead of an integral over $(0,\pi)$, but clearly there is no risk of ambiguity. Also we have that
$$f_{0} \in H^{1/2}(\Gamma_{0}) \iff |f_{0,0}|^2 + \sum_{k \geq 1} \mu_{0,k}|f_{0,k}|^2 < \infty,$$
the right hand side being equivalent to the norm of $f_{0}$ in $H^{1/2}(\Gamma_{0})$. Analogously  
$$g_{1} \in H^{-1/2}(\Gamma_{1}) \iff |g_{1,0}|^2 + \sum_{k \geq 1} {1 \over \mu_{1,k}}|g_{1,k}|^2 < \infty,$$
again the right hand side being equivalent to the norm of $g_{1}$ in $H^{-1/2}(\Gamma_{1})$.

Now we can state the following auxiliary result:

\begin{lemma} 
Let $(f_{0},g_{1}) \in H^{1/2}(\Gamma_{0})\times H^{-1/2}(\Gamma_{1})$ be given.
With the above notations for the Steklov eigenvalues and eigenfunctions, and for the Fourier coefficients $f_{0,k}, g_{1,k}$, the solution of \eqref{eq:Aux-f0-g1} is given by
\begin{equation}\label{eq:Sol-v-series}
v = \ell\, g_{1,0}\,\psi_{1,0} +  
\sum_{k \geq 1} {\tanh(k\ell) \over k}\, g_{1,k}\,\psi_{1,k}
+ \sum_{k \geq 0} f_{0,k}\,\psi_{0,k} 
\end{equation}
and on $\Gamma_{0}$ we have:
\begin{equation}\label{eq:dv-dn}
{\partial v \over\partial{\bf n}} = {-1 \over \sqrt{\pi}} \, g_{1,0} + \sqrt{{2 \over \pi}} \,
\sum_{k \geq 1}\left[ k\,\tanh(k\ell)\, f_{0,k} - {1 \over \cosh(k\ell)} \, g_{1,k}\right] \cos(kx).
\end{equation}
\end{lemma}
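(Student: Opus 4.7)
The plan is to argue by superposition and then read off both formulas from the explicit Steklov spectral decomposition. Set $v = v_0 + v_1$ where $v_0$ solves \eqref{eq:Aux-f0-g1} with $g_1 = 0$, and $v_1$ solves it with $f_0 = 0$; uniqueness and linearity make this legitimate.

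For $v_0$, evaluate the eigenfunctions $\psi_{0,k}$ on $\Gamma_0$: one has $\psi_{0,0}(x,0) = 1/\sqrt{\pi}$ and $\psi_{0,k}(x,0) = \sqrt{2/\pi}\cos(kx)$ for $k\ge 1$, so that $(\psi_{0,k}(\cdot,0))_{k\ge 0}$ is an orthonormal basis of $L^2(\Gamma_0)$. Since each $\psi_{0,k}$ is harmonic in $\Omega$ with $\partial_{\mathbf n}\psi_{0,k} = 0$ on $\Gamma\cup\Gamma_1$, the candidate $v_0 := \sum_{k\ge 0} f_{0,k}\psi_{0,k}$ trivially satisfies the three homogeneous conditions, while $v_0 = f_0$ on $\Gamma_0$ follows from the Fourier expansion of $f_0$ in that basis. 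For $v_1$, do the analogous calculation on $\Gamma_1$: $\psi_{1,0}(x,\ell) = 1/\sqrt{\pi}$ and $\psi_{1,k}(x,\ell) = \sqrt{2/\pi}\cos(kx)$ for $k\ge 1$, again orthonormal in $L^2(\Gamma_1)$. Each $\psi_{1,k}$ is harmonic and vanishes on $\Gamma_0$, with $\partial_{\mathbf n}\psi_{1,k} = 0$ on $\Gamma$. Writing $v_1 = \sum c_k\psi_{1,k}$, the Steklov relation $\partial_{\mathbf n}\psi_{1,k}|_{\Gamma_1} = \mu_{1,k}\psi_{1,k}|_{\Gamma_1}$ forces $c_k\mu_{1,k} = g_{1,k}$. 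With $\mu_{1,0} = 1/\ell$ and $\mu_{1,k} = k\cotanh(k\ell)$ for $k\ge 1$, this yields $c_0 = \ell g_{1,0}$ and $c_k = \tanh(k\ell)\,g_{1,k}/k$, which is exactly \eqref{eq:Sol-v-series}.

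For the normal derivative on $\Gamma_0$, note that there $\mathbf n = (0,-1)$ so $\partial_{\mathbf n} = -\partial_y$. Term by term from the explicit formulas \eqref{eq:psi-0k}--\eqref{eq:psi-1k}:
\begin{align*}
-\partial_y\psi_{0,k}(x,0) &= \sqrt{2/\pi}\,k\tanh(k\ell)\cos(kx),\quad k\ge 1, \\
-\partial_y\psi_{1,0}(x,0) &= -\frac{1}{\ell\sqrt{\pi}}, \\
-\partial_y\psi_{1,k}(x,0) &= -\sqrt{2/\pi}\,\frac{k}{\sinh(k\ell)}\cos(kx),\quad k\ge 1,
\end{align*}
and $\partial_y\psi_{0,0} = 0$. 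Inserting these in the series \eqref{eq:Sol-v-series} and using the identity $\tanh(k\ell)/\sinh(k\ell) = 1/\cosh(k\ell)$ gives exactly \eqref{eq:dv-dn}.

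The only nontrivial point will be justifying that these formal series actually live in $H^1(\Omega)$ and that their traces on $\Gamma_0, \Gamma_1$ make sense as elements of $H^{1/2}$ and $H^{-1/2}$ respectively. Under the assumption $f_0\in H^{1/2}(\Gamma_0)$ and $g_1\in H^{-1/2}(\Gamma_1)$, the characterizations given just before the lemma show $\sum\mu_{0,k}|f_{0,k}|^2 <\infty$ and $\sum \mu_{1,k}^{-1}|g_{1,k}|^2 < \infty$; combining these with the explicit formulas for $\psi_{j,k}$ (whose $H^1(\Omega)$ norms behave like $\mu_{j,k}^{1/2}$ and $\mu_{1,k}^{-1/2}$ respectively, by integration by parts against themselves) shows that both series converge in $H^1(\Omega)$, so the termwise differentiations on the boundary are legitimate and $v$ so constructed coincides with the unique weak solution of \eqref{eq:Aux-f0-g1}.
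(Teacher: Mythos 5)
Your proof is correct and follows essentially the same route as the paper: expand $v$ in the two Steklov families $(\psi_{0,k})$ and $(\psi_{1,k})$, identify the coefficients from the boundary data on $\Gamma_0$ and $\Gamma_1$, and justify the termwise normal derivative by $H^1$-convergence of the partial sums (the superposition $v=v_0+v_1$ is just a more explicit organization of the same computation). The convergence argument via $\int_\Omega|\nabla\psi_{j,k}|^2=\mu_{j,k}$ and the characterizations of $H^{\pm 1/2}$ stated before the lemma is exactly the justification the paper leaves implicit.
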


\begin{proof}
We know that $(\psi_{0,k}(\cdot,0))_{k \geq 0}$ and $(\psi_{1,k}(\cdot,\ell))_{k \geq 0}$ are  Hilbert bases of $L^2(\Gamma_{0})$ and $L^2(\Gamma_{1})$ respectively (actually up to an appropriate normalization $(\psi_{1,k}(\cdot,\ell))_{k\geq 0}$ can be also considered as a Hilbert basis in $H^{-1/2}(\Gamma_{1})$). Now, if we express $f_{0}$ and $g_{1}$ in terms of their coefficients in these bases, since $v$ is entirely determined by its traces on $\Gamma_{0}$ and $\Gamma_{1}$ we may write  
$$v = \sum_{k \geq 0}\alpha_{k} \psi_{0,k} + \sum_{k \geq 0} \beta_{k}\psi_{1,k}.$$
In order to find the coefficients $\alpha_{k}$ and $\beta_{k}$, it is sufficient to multiply equation \eqref{eq:Aux-f0-g1} by $\psi_{0,k}$ and $\psi_{1,k}$, and one finds easily the expression given by \eqref{eq:Sol-v-series}. 

For the determination of the normal derivative $\partial v/\partial{\bf n}$ on $\Gamma_{0}$, one considers first a finite sum in \eqref{eq:Sol-v-series}, and then pass to the limit in $H^{-1/2}(\Gamma_{0})$, since clearly the series in \eqref{eq:dv-dn} converges in this space, thanks to the assumptions on $f_{0}$ and $g_{1}$.
\end{proof}

From the expression of $\partial v/\partial{\bf n}$ it is clear that we may infer the following:

\begin{corollary} Let $g_{1} \in H^{-1/2}(\Gamma_{1})$ be given. For any  $f_{0} \in H^{1/2}(\Gamma_{0})$ and $g_{0} \in H^{-1/2}(\Gamma_{0})$ the solution $v$ of \eqref{eq:Aux-f0-g1} is solution to 
\eqref{eq:Pb-g0-g*} % eq:Pb-g0-g1
if and only if 
we have $g_{0,0} = - g_{1,0}$ and for all $k \geq 1$ 
\begin{equation}\label{eq:CNS-f0-g0}
g_{0,k} = k\,\tanh(k\ell)\, f_{0,k} - {1 \over \cosh(k\ell)} \, g_{1,k}.
\end{equation}
\end{corollary}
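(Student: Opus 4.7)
The plan is to reduce the claim to a coefficient‑by‑coefficient comparison in the orthonormal basis $(\psi_{0,k}(\cdot,0))_{k\geq 0}$ of $L^2(\Gamma_0)$, so that essentially the entire content of the corollary is read off from formula \eqref{eq:dv-dn}.

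First I would observe that by construction any solution $v$ of \eqref{eq:Aux-f0-g1} is harmonic in $\Omega$ and already satisfies $v=f_0$ on $\Gamma_0$. Therefore $v$ solves \eqref{eq:Pb-g0-g*} (in the version where $\Gamma_*$ is replaced by $\Gamma_1$ and $g_*$ by $g_1$, which is the only case compatible with the hypothesis $g_1\in H^{-1/2}(\Gamma_1)$) if and only if the Neumann trace $\partial v/\partial{\bf n}$ on $\Gamma_0$ coincides with the prescribed datum $g_0$. Thus the proof reduces to showing that the identity $\partial v/\partial{\bf n}_{|\Gamma_0} = g_0$ in $H^{-1/2}(\Gamma_0)$ is equivalent to the two series of scalar equations stated in the corollary.

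Next I would expand both sides in the Hilbert basis of $L^2(\Gamma_0)$ formed by the traces $\psi_{0,k}(\cdot,0)$, extended by duality to an unconditional decomposition in $H^{-1/2}(\Gamma_0)$. By the very definition of the Fourier coefficients, the expansion of $g_0$ reads
\begin{equation*}
g_0 = g_{0,0}\,\psi_{0,0}(\cdot,0) + \sum_{k\geq 1} g_{0,k}\,\psi_{0,k}(\cdot,0).
\end{equation*}
On the other hand, using $\psi_{0,0}(x,0)=1/\sqrt{\pi}$ and $\psi_{0,k}(x,0)=\sqrt{2/\pi}\cos(kx)$ for $k\geq 1$, formula \eqref{eq:dv-dn} can be rewritten as
\begin{equation*}
\frac{\partial v}{\partial{\bf n}}\bigg|_{\Gamma_0} = -g_{1,0}\,\psi_{0,0}(\cdot,0) + \sum_{k\geq 1}\left[k\tanh(k\ell)\,f_{0,k} - \frac{1}{\cosh(k\ell)}\,g_{1,k}\right]\psi_{0,k}(\cdot,0).
\end{equation*}

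Finally, by uniqueness of the coefficients in this basis, the two expansions agree if and only if $g_{0,0}=-g_{1,0}$ and \eqref{eq:CNS-f0-g0} holds for every $k\geq 1$. There is no genuine obstacle here: the lemma has done all the work, and the corollary is simply its coordinate reading on $\Gamma_0$.
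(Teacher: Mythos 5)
Your proof is correct and matches the paper's (implicit) argument: the corollary is stated as an immediate consequence of formula \eqref{eq:dv-dn}, and identifying coefficients of $\partial v/\partial{\bf n}_{|\Gamma_0}$ against those of $g_0$ in the basis $(\psi_{0,k}(\cdot,0))_{k\ge 0}$ is exactly what is intended. Your reading that the target problem is the limit case \eqref{eq:Pb-g0-g1} (i.e.\ $\Gamma_*=\Gamma_1$, $g_*=g_1$) is also the right one, since the remaining conditions of that problem reduce precisely to $\partial v/\partial{\bf n}=g_0$ on $\Gamma_0$.
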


One sees that, as we already pointed out, problem \eqref{eq:Pb-g0-g1} has infinitely many solutions, since for each given $f_{0} \in H^{1/2}(\Gamma_{0})$ one may determine $g_{0}$ thanks to the above corollary, in which case the solution of \eqref{eq:Pb-g0-g1} obtained in this way satisfies moreover $\partial u/\partial{\bf n} = 0$ on $\Gamma = \partial\Omega \setminus (\Gamma_{0}\cup \Gamma_{1})$.
\medskip

Incidently, the above analysis shows that in order to find a harmonic function $u$ with Cauchy data $(f_{0},g_{0})$ on $\Gamma_{0}$, more precisely in order to solve the following problem: find $u\in H^1(\Omega)$ such that
\begin{equation}\label{CL1}
\begin{cases}
-\Delta u = 0 & \mbox{in }\, \Omega \cr
u = f_0 & \mbox{on }\, \Gamma_0 \cr
\displaystyle {\partial u \over \partial {\bf n} } = g_0 & \vphantom{{\partial u \over \partial {\bf n} } }\mbox{on }\, \Gamma_0\cr
\displaystyle {\partial u \over \partial {\bf n} } = 0 & \vphantom{{\partial u \over \partial {\bf n} } }\mbox{on }\, \Gamma\,,\cr
\end{cases}
\end{equation}
one possibility is to find $g_{1} \in H^{-1/2}(\Gamma_{1})$ such that the solution $v$ of \eqref{eq:Aux-f0-g1} satisfies $\partial v/\partial{\bf n} = g_{0}$. Thus we may state the following necessary and sufficient condition on the compatibility of $f_{0},g_{0}$:

\begin{proposition} \label{Thm1} 
Let $f_0 \in H^{1/2}(\Gamma_0)$ and $g_0 \in H^{-1/2}(\Gamma_0)$. With the above notations, equation \eqref{CL1} has a unique solution $u \in H^1(\Omega )$ if, and only if, the Cauchy boundary data $f_0,g_0$ satisfy the following compatibility condition:
\begin{equation} \label{CLCNS}
\sum_{k=1}^{\infty }k \sinh^2(k\ell) \left(f_{0,k} - {\cotanh(k\ell)\over k }\, g_{0,k} \right)^2 <
\infty .
\end{equation}
Moreover, when the above condition is satisfied, the solution $u$ is given by
$$u = \sum_{k \geq 0}f_{0,k}\psi_{0,k} - \ell g_{0,0}\psi_{1,0} + 
\sum_{k \geq 1} {\sinh^2(k\ell) \over \cosh(k\ell) }
\left(f_{0,k} - {\cotanh(k\ell) \over k} g_{0,k}\right) \psi_{1,k}\, ,$$
and there exist two positive constants $c_1,c_2$ such that if we denote by  $\|(f_{0},g_{0})\|_{*}^2$ the quantity
$$|f_{0,0}|^2 + |g_{0,0}|^2 + 
\sum_{k=1}^{\infty }k|f_{0,k}|^2 + 
\sum_{k\geq 1} k\sinh^2(k\ell)
\left(f_{0,k} - {\cotanh(k\ell)\over k }\, g_{0,k} \right)^2\, ,$$
then we have
\begin{equation}\label{eq:EstimCL}
c_{1}\, \|(f_{0},g_{0})\|_{*}^2 
\leq  \|u\|_{H^1(\Omega)}^2 \leq 
c_{2}\, \|(f_{0},g_{0})\|_{*}^2 .
\end{equation}
\end{proposition}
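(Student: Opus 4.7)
The plan is to reduce \eqref{CL1} to the auxiliary mixed problem \eqref{eq:Aux-f0-g1} by searching for a Neumann datum $g_{1}\in H^{-1/2}(\Gamma_{1})$ such that the resulting solution $v$ satisfies $\partial v/\partial{\bf n}=g_{0}$ on $\Gamma_{0}$. By the corollary preceding the proposition, this is equivalent to the system $g_{1,0}=-g_{0,0}$ and, for $k\geq 1$,
\[
g_{1,k} \;=\; k\sinh(k\ell)\left(f_{0,k} - \frac{\cotanh(k\ell)}{k}\,g_{0,k}\right),
\]
obtained by solving \eqref{eq:CNS-f0-g0} for $g_{1,k}$. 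This uniquely prescribes the sequence $(g_{1,k})_{k\geq 0}$ in terms of the Cauchy data.

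Next, one recalls that $g_{1}\in H^{-1/2}(\Gamma_{1})$ if and only if $|g_{1,0}|^{2}+\sum_{k\geq 1}|g_{1,k}|^{2}/\mu_{1,k}<\infty$, with $\mu_{1,k}=k\cotanh(k\ell)$. Substituting the expression of $g_{1,k}$ one sees that this amounts to the finiteness of
\[
|g_{0,0}|^{2} + \sum_{k\geq 1} k\sinh^{2}(k\ell)\,\tanh(k\ell)\left(f_{0,k}-\frac{\cotanh(k\ell)}{k}\,g_{0,k}\right)^{2},
\]
which, since $\tanh(k\ell)\in[\tanh(\ell),1)$ for $k\geq 1$, is equivalent to \eqref{CLCNS}. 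In that case \eqref{eq:Aux-f0-g1} yields a unique $v\in H^{1}(\Omega)$ which by construction solves \eqref{CL1}, and injecting the $g_{1,k}$ into the series \eqref{eq:Sol-v-series}, together with the identity $\tanh(k\ell)\sinh(k\ell)=\sinh^{2}(k\ell)/\cosh(k\ell)$, produces precisely the claimed explicit formula for $u$. Conversely, if a solution $u\in H^{1}(\Omega)$ to \eqref{CL1} exists, its Neumann trace on $\Gamma_{1}$ belongs to $H^{-1/2}(\Gamma_{1})$ and is forced (by the compatibility just derived) to coincide with the $g_{1}$ above, so \eqref{CLCNS} is necessary; uniqueness follows either from this observation or from Holmgren's unique continuation applied to the difference of two solutions.

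The two-sided estimate will be obtained by a direct separation-of-variables computation. The Neumann condition on $\Gamma$ forces
\[
u(x,y) \;=\; a_{0}+b_{0}y+\sum_{k\geq 1}\cos(kx)\bigl[A_{k}\cosh(ky)+B_{k}\sinh(ky)\bigr],
\]
and matching traces gives $a_{0}=f_{0,0}/\sqrt{\pi}$, $b_{0}=-g_{0,0}/\sqrt{\pi}$, $A_{k}=\sqrt{2/\pi}\,f_{0,k}$, $B_{k}=-\sqrt{2/\pi}\,g_{0,k}/k$. A direct calculation of $\|\nabla u\|^{2}_{L^{2}(\Omega)}$ using $\int_{0}^{\ell}\cosh(2ky)\,dy=\sinh(k\ell)\cosh(k\ell)/k$ and $\int_{0}^{\ell}\sinh(2ky)\,dy=\sinh^{2}(k\ell)/k$ leads, after substitution, to an expression proportional to
\[
g_{0,0}^{2} + \sum_{k\geq 1}\frac{\sinh(k\ell)\cosh(k\ell)}{k}\Bigl[k^{2}f_{0,k}^{2}-2k\tanh(k\ell)\,f_{0,k}g_{0,k}+g_{0,k}^{2}\Bigr],
\]
and Poincaré allows us to add the lower-order contribution $f_{0,0}^{2}$ coming from $\|u\|^{2}_{L^{2}}$. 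Expanding the square defining $\|(f_{0},g_{0})\|_{*}^{2}$ and using $1+\sinh^{2}=\cosh^{2}$ rewrites that norm as
\[
f_{0,0}^{2}+g_{0,0}^{2}+\sum_{k\geq 1}\frac{\cosh^{2}(k\ell)}{k}\bigl[k^{2}f_{0,k}^{2}-2k\tanh(k\ell)f_{0,k}g_{0,k}+g_{0,k}^{2}\bigr],
\]
the same quadratic form as above with the scalar $\cosh^{2}(k\ell)/k$ in place of $\sinh(k\ell)\cosh(k\ell)/k$; the two scalars differ by the factor $\tanh(k\ell)\in[\tanh(\ell),1)$, which is bounded above and below uniformly in $k\geq 1$, and this gives \eqref{eq:EstimCL}.

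The main obstacle I expect is the identification of the correct positive-definite quadratic form and the verification that the two quadratic forms obtained (one from $\|\nabla u\|^{2}_{L^{2}}$, the other from $\|(f_{0},g_{0})\|_{*}^{2}$) are equivalent uniformly in $k$. The positive definiteness itself is clear because the discriminant $4k^{2}(\tanh^{2}(k\ell)-1)$ is negative, or more explicitly because $k^{2}f^{2}-2k\tanh(k\ell)fg+g^{2}=(kf-\tanh(k\ell)\,g)^{2}+g^{2}/\cosh^{2}(k\ell)$; what requires care is tracking the hyperbolic weights so that the common scalar factor pulled out is comparable to $\cosh^{2}(k\ell)/k$, which is where the harmless bound on $\tanh(k\ell)$ is used.
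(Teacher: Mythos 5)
Your argument is correct and follows exactly the route the paper sets up: reduce \eqref{CL1} to the auxiliary problem \eqref{eq:Aux-f0-g1} by solving \eqref{eq:CNS-f0-g0} for $g_{1,k}$, translate $g_{1}\in H^{-1/2}(\Gamma_{1})$ into \eqref{CLCNS} via the weights $1/\mu_{1,k}$, and obtain the norm equivalence by comparing the two quadratic forms $\frac{\sinh(k\ell)\cosh(k\ell)}{k}Q_{k}$ and $\frac{\cosh^{2}(k\ell)}{k}Q_{k}$ with $Q_{k}(f,g)=k^{2}f^{2}-2k\tanh(k\ell)fg+g^{2}\geq 0$, whose ratio $\tanh(k\ell)\in[\tanh\ell,1)$ is uniformly bounded above and below. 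All the intermediate identities you use check out, so this is essentially the paper's intended proof.
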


\begin{remark} 
Note that in our analysis of the resolution of \eqref{eq:Hadamard} we began with the resolution of the mixed boundary value problem \eqref{eq:Aux-f0-g1}, while we could have proceeded with another choice, for instance by solving 
\begin{equation}
\begin{cases}
-\Delta v = 0 & \mbox{in }\, \Omega \cr
v = f_0 & \mbox{on }\, \Gamma_0 \cr
v = f_1 & \mbox{on }\, \Gamma_1\cr
\displaystyle {\partial v \over \partial {\bf n} } = 0 & \vphantom{{\partial v \over \partial {\bf n} } }\mbox{on }\, \Gamma.\cr
\end{cases}
\label{D1}
\end{equation}
for some $f_{1} \in H^{1/2}(\Gamma_{1})$. Then we would have found a condition on $f_{0},f_{1}$ such that for a given $g_{0}$ we have $\partial v/\partial{\bf n} = g_{0}$, where $v$ is the unique solution of the above equation \eqref{D1}. In this case the Steklov eigenvalues and eigenfunctions $\psi_{1,k}$ should be replaced with $\mu_{1,0} := 0$ and $\psi_{1,0} := 1/\sqrt{\pi}$, and for $k \geq 1$
$$
\mu_{1,k} :=  k\, \tanh(k\ell) = \mu_{0,k}, \qquad
\psi_{1,k}(x,y) :=  \sqrt{{2 \over \pi}}\,\cos(kx){\cosh(ky) \over \cosh(k\ell)}.$$
Then condition \eqref{CLCNS} would be replaced with the following necessary and sufficient condition
$$\sum_{k=1}^{\infty }k \cosh^2(k\ell) \left(f_{0,k} - {\tanh(k\ell)\over k }\, g_{0,k} \right)^2 <
\infty ,$$
which is equivalent to \eqref{CLCNS}. \qed
\end{remark}
\medskip
\begin{remark}
If $\Gamma_{*}$ is as in \eqref{eq:Gamma-*} and $g_{*} \in H^{-1/2}(\Gamma_{*})$, let $u \in H^1(\Omega)$ be the solution of \eqref{CL1} for a compatible pair $(f_{0},g_{0}) \in H^{1/2}(\Gamma_{0})\times H^{-1/2}(\Gamma_{0})$, and let us compute $\partial u/\partial y$ restricted to $\Gamma_{*}$ and compare it with $g_{*}$. We have, for $k \geq 1$
\begin{align*}
{\partial \psi_{0,k}(x,\ell_{*}) \over\partial y } &= {-k\sinh(k(\ell -\ell_{*})) \over \cosh(k\ell)}\, \sqrt{{2 \over \pi }} \cos(kx) \\
{\partial \psi_{1,k}(x,\ell_{*}) \over\partial y } &= {k\sinh(k\ell_{*}) \over \cosh(k\ell)}\, \sqrt{{2 \over \pi }} \cos(kx),
\end{align*}
while for $k= 0$ we have
$${\partial\psi_{0,0} \over \partial{\bf n}} = 0, \qquad {\partial\psi_{1,0} \over \partial{\bf n}} = {1 \over \ell\sqrt{\pi} }.$$
Setting for $k \geq 1$
\begin{align*}
\alpha_{k} &:= {-k\sinh(k(\ell -\ell_{*})) \over \cosh(k\ell)}f_{0,k} \\
\beta_{k} & := {\sinh^2(k\ell) \over \cosh^2(k\ell) }
\left(f_{0,k} - {\cotanh(k\ell) \over k} g_{0,k}\right) \, k\sinh(k\ell_{*}),  
\end{align*}
we have
$${\partial u(x,\ell_{*}) \over\partial{\bf n} } = {-g_{0,0} \over \sqrt{\pi}} + \sqrt{{2 \over \pi}} \sum_{k \geq 1} (\alpha_{k} + \beta_{k}) \cos(kx).$$
Therefore one sees that the necessary and sufficient condition for the existence of $(f_{0},g_{0})$ such that condition \eqref{CLCNS} is satisfied, and moreover we have $g_{*} = \partial u(\cdot,\ell_{*})/\partial{\bf n}$, is that the Fourier coefficients of $g_{*}$
$$a_{0} := {1 \over \sqrt{\pi}}\int_{0}^\pi g(x)\,dx, \qquad a_{k} := \sqrt{{2 \over \pi}}\int_{0}^\pi g_{*}(x)\,\cos(kx)\,dx\quad\mbox{for }\, k \geq 1$$
are such that $a_{k} = \alpha_{k} + \beta_{k}$ and $a_{0} = -g_{0,0}/\sqrt{\pi}$. Since $\alpha_{k}$ has an exponential decay (of order $\exp(-k\ell_{*})$) and $\beta_{k}$ also has an exponential decay of order $\exp(-k(\ell -\ell_{*}))$, due to the condition \eqref{CLCNS}, one sees that $a_{k}$ must have a decay of order 
$\exp(-k\max(\ell_{*},\ell-\ell_{*}))$,
and in particular one sees that $g_{*}$ must be analytic on $(0,\pi)$.

If one wishes only to approximate a given $g_{*} \in H^{-1/2}(\Gamma_{*})$, clearly one can do so by taking a finite sum
$${a_{0} \over \sqrt{\pi}} + \sqrt{{2 \over \pi}}\sum_{k=1}^n a_{k}\cos(kx),$$
and then find coefficients $f_{0,k},g_{0,k}$ for $1 \leq k \leq n$, such that
$$a_{k} = {-k\sinh(k(\ell -\ell_{*})) \over \cosh(k\ell)}f_{0,k} + {\sinh^2(k\ell) \over \cosh^2(k\ell) }
\left(f_{0,k} - {\cotanh(k\ell) \over k} g_{0,k}\right) \, k\sinh(k\ell_{*}).$$
For instance one choice may be
\begin{align*}
f_{0,k} &:= {-\cosh(k\ell) \over k\sinh(k(\ell - \ell_{*}))}a_{k}, \\
g_{0,k} &:= k\tanh(k\ell)f_{0,k} = {- \sinh(k\ell) \over \sinh(k(\ell - \ell_{*}))}a_{k},
\end{align*}
which shows why a numerical instability appears since, for instance, an error in the coefficient $a_{n}$ of order $\epsilon$ is transmitted as an error of order $\epsilon\exp(n\ell_{*})/n$ in the determination of $f_{0,n}$. One can check that any other choice of $f_{0,k},g_{0,k}$ yields the same type of numerical error. \qed
\end{remark}
\medskip

In the following corollary we state a noteworthy result for the case in which the domain $(0,\pi)\times (0,\ell)$ is replaced by $(0,\ell_{1})\times (0,\ell_{2})$: its proof is straightforward after a slight adaptation of the Steklov eigenfunctions $\psi_{j,k}$.

\begin{corollary} \label{thm:Cas-gene}
Let $\Omega := (0,\ell_{1})\times(0,\ell_{2})$ for some $\ell_{1} > 0$ and $\ell_{2} > 0$, and denote
$$\Gamma_{0}:=[0,\ell_{1}]\times\{0\},\quad 
\Gamma_{1} := [0,\ell_{1}]\times\{\ell_{2}\},\quad
\Gamma := \partial\Omega \setminus\left(\Gamma_{0}\cup\Gamma_{1}\right).$$
Let the Steklov eigenfunctions and eigenvalues $\psi_{j,k},\mu_{j,k}$ be defined by $\psi_{0,0}:= 1/\sqrt{\ell_{1}}$, with $\mu_{0,0} := 0$, and $\psi_{1,0} := y/(\ell_{2}\sqrt{\ell_{1}})$ with $\mu_{1,0} = 1/\ell_{2}$, while for $k \geq 1$
\begin{equation}
\psi_{0,k}(x,y) := \sqrt{{2 \over \ell_{1}}}\,\cos(k\pi x/\ell_{1}) \, {\cosh(k\pi(\ell_{2}-y)/\ell_{1}) \over \cosh(k\pi\ell_{2}/\ell_{1})}\, , \qquad \mu_{0,k} := k\tanh(k\pi\ell_{2}/\ell_{1})\, ,
\end{equation}
and
\begin{equation} 
\psi_{1,k}(x,y) := \sqrt{{2 \over \ell_{1}}}\, \cos(k\pi x/\ell_{1}) \, {\sinh(k\pi y/\ell_{1}) \over \sinh(k\pi\ell_{2}/\ell_{1})}\,, \qquad \mu_{1,k} := k\cotanh(k\pi\ell_{2}/\ell_{1}) .
\end{equation} 
If $f_0 \in H^{1/2}(\Gamma_0)$ and $g_0 \in H^{-1/2}(\Gamma_0)$ are given, for $k \geq 0$ integer denote 
\begin{equation} \label{eq:Def-fg0k}
f_{0,k} := \int_0^{\ell_{1}} f_0(x)\psi_{0,k}(x,0)dx, \qquad
g_{0,k} := \int_0^{\ell_{1}} g_0(x)\psi_{0,k}(x,0)dx\, .
\end{equation}
Then the following equation 
\begin{equation}\label{eq:CL1-gene}
\begin{cases}
-\Delta u = 0 & \mbox{in }\, \Omega \cr
u = f_0 & \mbox{on }\, \Gamma_0 \cr
\displaystyle {\partial u \over \partial {\bf n} } = g_0 & \vphantom{{\partial u \over \partial {\bf n} } }\mbox{on }\, \Gamma_0\cr
\displaystyle {\partial u \over \partial {\bf n} } = 0 & \vphantom{{\partial u \over \partial {\bf n} } }\mbox{on }\, \Gamma.\cr
\end{cases}
\end{equation}
has a unique solution $u \in H^1(\Omega )$ if, and only if, the
Cauchy boundary data $f_0,g_0$ satisfy the following compatibility condition:
\begin{equation} \label{eq:CLCNS-gene}
\sum_{k=1}^{\infty }k \sinh^2(k\pi\ell_{2}/\ell_{1}) \left(f_{0,k} - {\ell_{1}\cotanh(k\pi\ell_{2}/\ell_{1})\over k \pi}\, g_{0,k} \right)^2 <
\infty .
\end{equation}
Moreover, when the above condition is satisfied we have
\begin{equation*}%\label{eq:Expression-u}
u = \sum_{k=0}^\infty f_{0,k}\psi_{0,k} - g_{0,0}\psi_{0,0} + 
\sum_{k=1}^\infty {\sinh^2(k\pi\ell_{2}/\ell_{1}) \over \cosh(k\pi\ell_{2}/\ell1)}\left(f_{0,k} - 
{\ell_{1}\cotanh(k\pi\ell_{2}/\ell_{1}) \over k\pi}g_{0,k}\right)\psi_{1,k}\, ,
\end{equation*}
and there exist two positive constants $c_1,c_2$ such that
if we denote by  $\|(f_{0},g_{0})\|_{*}^2$ the quantity
\begin{equation}\label{eq:Norm-Cauchy-data}
|f_{0,0}|^2 + |g_{0,0}|^2  + 
\sum_{k=1}^{\infty }k|f_{0,k}|^2 + 
\sum_{k=1}^\infty k\sinh^2(k\pi\ell_{2}/\ell_{1})
\left(f_{0,k} - {\ell_{1}\cotanh(k\pi\ell_{2}/\ell_{1})\over k\pi }\, g_{0,k} \right)^2 \nonumber 
\end{equation}
then we have 
\begin{equation}\label{eq:EstimCL-gene}
c_{1}\, \|(f_{0},g_{0})\|_{*}^2 
\leq  \|u\|_{H^1(\Omega)}^2 \leq 
c_{2}\, \|(f_{0},g_{0})\|_{*}^2 .
\end{equation}
\end{corollary}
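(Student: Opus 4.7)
The plan is to deduce the corollary from Proposition \ref{Thm1} by a linear change of variables. Set $\tilde{x} := \pi x/\ell_1$ and $\tilde{y} := \pi y/\ell_1$. This rescaling maps $\Omega = (0,\ell_1)\times(0,\ell_2)$ onto $\tilde\Omega := (0,\pi)\times(0,\tilde\ell)$ with $\tilde\ell := \pi\ell_2/\ell_1$, and preserves harmonicity since $\Delta_{(\tilde x,\tilde y)}\tilde u = (\ell_1/\pi)^2\,\Delta_{(x,y)}u$. Setting $\tilde{u}(\tilde x,\tilde y) := u(\ell_1\tilde x/\pi,\ell_1\tilde y/\pi)$, the boundary value problem \eqref{eq:CL1-gene} for $u$ transforms into \eqref{CL1} for $\tilde u$ on $\tilde\Omega$, with Cauchy data $\tilde f_0(\tilde x) := f_0(\ell_1\tilde x/\pi)$ on $\tilde\Gamma_0$ and $\tilde g_0(\tilde x) := (\ell_1/\pi)\,g_0(\ell_1\tilde x/\pi)$ (the factor $\ell_1/\pi$ arising from $\partial_{\tilde y} = (\ell_1/\pi)\partial_y$), and with the vanishing normal derivative on the lateral sides preserved.

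The next step is to relate the Fourier-Steklov coefficients. The eigenfunctions of the Steklov problem on $\tilde\Omega$ associated to $\tilde\Gamma_0$ are exactly the functions $\psi_{0,k}$ of Proposition \ref{Thm1} with parameter $\tilde\ell$, and a direct computation of $\int_0^\pi \tilde f_0(\tilde x)\psi_{0,k}(\tilde x,0)\,d\tilde x$ (and similarly for $\tilde g_0$) using the substitution $x = \ell_1\tilde x/\pi$ shows that the coefficients $\tilde f_{0,k}, \tilde g_{0,k}$ of $\tilde f_0,\tilde g_0$ in the $(0,\pi)$-normalized basis are explicit scalar multiples, depending only on $\ell_1/\pi$, of the coefficients $f_{0,k}, g_{0,k}$ of $f_0,g_0$ in the $(0,\ell_1)$-normalized basis \eqref{eq:Def-fg0k}. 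Applying Proposition \ref{Thm1} to $\tilde u$ then yields a necessary and sufficient compatibility condition of the form \eqref{CLCNS}, which, after substitution of $\tilde\ell=\pi\ell_2/\ell_1$ and the scaling of the coefficients, becomes exactly \eqref{eq:CLCNS-gene}. The series representation for $u$ follows by pulling back the series for $\tilde u$ term by term; one checks that the pulled-back Steklov eigenfunctions on $\Omega$ coincide with the $\psi_{j,k}$ written in the statement.

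The norm equivalence \eqref{eq:EstimCL-gene} is then a direct consequence of \eqref{eq:EstimCL} applied to $\tilde u$: the change of variables has constant Jacobian $(\pi/\ell_1)^2$, so $\|u\|_{H^1(\Omega)}^2$ and $\|\tilde u\|_{H^1(\tilde\Omega)}^2$ are comparable up to constants depending only on $\ell_1,\ell_2$, and the same comparability holds between the two expressions of $\|(f_0,g_0)\|_*^2$. The main obstacle is purely bookkeeping: tracking the various normalization factors and checking that the exact trigonometric and hyperbolic prefactors reassemble into the claimed formulas. No new analytic content is needed beyond Proposition \ref{Thm1}, which is consistent with the author's remark that the proof is straightforward after a slight adaptation of the Steklov eigenfunctions.
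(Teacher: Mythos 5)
Your proposal is correct and is essentially the paper's own route: the corollary is obtained from Proposition \ref{Thm1} by ``a slight adaptation of the Steklov eigenfunctions'', and your isotropic dilation $(\tilde x,\tilde y)=(\pi x/\ell_1,\pi y/\ell_1)$ (which preserves harmonicity and sends $\Omega$ to $(0,\pi)\times(0,\pi\ell_2/\ell_1)$) is exactly a systematic way of carrying out that adaptation, with the scalings $\tilde f_{0,k}=\sqrt{\pi/\ell_1}\,f_{0,k}$ and $\tilde g_{0,k}=\sqrt{\ell_1/\pi}\,g_{0,k}$ turning \eqref{CLCNS} into \eqref{eq:CLCNS-gene} as you describe. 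As a minor byproduct, carrying your bookkeeping through the $k=0$ mode shows that the displayed series for $u$ should read $-\ell_2\,g_{0,0}\,\psi_{1,0}$ rather than $-g_{0,0}\,\psi_{0,0}$, in agreement with the term $-\ell\,g_{0,0}\,\psi_{1,0}$ in Proposition \ref{Thm1}.
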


\begin{remark}
It is important to point out a particular feature of the above compatibility condition \eqref{eq:CLCNS-gene} on the Cauchy boundary data $f_{0} \in H^{1/2}(\Gamma_{0})$ and $g_{0} \in H^{-1/2}(\Gamma_{0})$. 
Indeed, this condition contains in a hidden and subtle way the homogeneous Neumann boundary condition $\partial u/\partial{\bf n} = 0$ on $\Gamma$, through the Steklov eigenfunctions $\psi_{j,k}$, and therefore the Fourier coefficients $f_{0,k}$ and $g_{0,k}$.

To be more specific, for a real parameter $\alpha \in [0,1]$ let us consider the following equation with Cauchy boundary data on $\Gamma_{0}$: find $u\in H^1(\Omega)$ such that
\begin{equation}\label{eq:CL1-alpha}
\begin{cases}
-\Delta u = 0 & \mbox{in }\, \Omega \cr
u = f_0 & \mbox{on }\, \Gamma_0 \cr
\displaystyle {\partial u \over \partial {\bf n} } = g_0 & \vphantom{{\partial u \over \partial {\bf n} } }\mbox{on }\, \Gamma_0\cr
\displaystyle \alpha {\partial u \over \partial {\bf n} } + (1-\alpha) u = 0 & \vphantom{{\partial u \over \partial {\bf n} } }\mbox{on }\, \Gamma.\cr
\end{cases}
\end{equation}
For $\alpha = 0$ we have a homogeneous Dirichlet boundary condition on $\Gamma$, while for $\alpha = 1$ the boundary condition is of Neumann type. For $0 < \alpha < 1$, the boundary condition on $\Gamma$ is a Fourier boundary condition (sometimes named Robin boundary condition). 

This equation can be solved in a similar manner by considering the following Steklov eigenfunctions. For $j= 0$ or $j = 1$ consider the functions $\psi_{j,k}$ solutions to the Steklov eigenvalue problem
\begin{equation}\label{eq:Steklov-j-alpha}
\begin{cases}
-\Delta \psi_{j,k} = 0 & \mbox{in }\, \Omega \cr
\displaystyle {\partial \psi_{j,k} \over \partial {\bf n} } = \mu_{j,k}\psi_{j,k} & \vphantom{{\partial u \over \partial {\bf n} } }\mbox{on }\, \Gamma_j\cr
\psi_{j,k} = 0 & \mbox{on }\, \Gamma_{1-j} \cr
\displaystyle \alpha {\partial \psi_{j,k} \over \partial {\bf n} } + (1 - \alpha) \psi_{j,k} = 0 & \vphantom{{\partial u \over \partial {\bf n} } }\mbox{on }\, \Gamma \cr
\end{cases}
\end{equation}
with the usual normalization $\int_{\Gamma_{j}}\psi_{j,k}(\sigma)\psi_{j,k'}(\sigma)\,d\sigma = \delta_{k'k}$. 
It is not difficult to see that $\psi_{1,k}(x,y) = \psi_{0,k}(x,\ell_{2}-y)$ and that 
$$\psi_{0,k}(x,y) = \phi_{k}(x) {\sinh(\sqrt{\lambda_{k}}\,(\ell_{2}-y)) \over \sinh(k\ell_{2})}$$
where $\phi_{k}$ solves the Sturm--Liouville eigenvalue problem on $(0,\ell_{1})$:
\begin{equation} 
\begin{cases}
-\phi''_{k} = \lambda_{k}\phi_{k} & \mbox{in }\, (0,\ell_{1}) \cr
-\alpha\phi'_{k}(0) + (1-\alpha) \phi_{k}(0) = 0 & \cr
\alpha\phi'_{k}(\ell_{1}) + (1-\alpha)\phi_{k}(\ell_{1}) = 0. & \cr
\end{cases}
\end{equation}
Now it is clear that $\lambda_{k}$ depends on $\alpha$ and that $\lambda_{k} \sim c_{*}(\ell_{1}) k^2$ as $k\to +\infty$. One may compute also the Steklov eigenvalues $\mu_{j,k}$ which are
$$\mu_{0,k} = \mu_{1,k} := \mu_{k} := \sqrt{\lambda_{k}}\, \cotanh(\sqrt{\lambda_{k}}\,\ell_{2}).$$
We then define the coefficients $f_{0,k},g_{0,k}$ as in \eqref{eq:Def-fg0k}, with the new eigenfunctions $\psi_{0,k}$, and in a manner strictly identical to what we have seen above, one finds that a necessary and sufficient condition on $(f_{0},g_{0})$ is given by 
\begin{equation}
\label{eq:CLCNS-alpha}
\sum_{k=1}^{\infty }\sqrt{\lambda_{k}}\, \cosh^2(\sqrt{\lambda_{k}}\,\ell_{2}) \left(f_{0,k} - {\tanh(\sqrt{\lambda_{k}}\,\ell_{2})\over \sqrt{\lambda_{k}}}\, g_{0,k} \right)^2 <
\infty .
\end{equation}
Denote also by $\|(f_{0},g_{0})\|_{*,\alpha}$ the norm defined by the quantity 
$$\|(f_{0},g_{0})\|_{*,\alpha}^2 := \sum_{k=1}^{\infty }\sqrt{\lambda_{k}}\,|f_{0,k}|^2 + 
\sqrt{\lambda_{k}}\,\cosh^2(\sqrt{\lambda_{k}}\,\ell_{2})
\left(f_{0,k} - {\tanh(\sqrt{\lambda_{k}}\,\ell_{2})\over \sqrt{\lambda_{k}}}\, g_{0,k} \right)^2,  $$
which can be considered as a norm defined by an appropriate scalar product, at least when $0\leq \alpha < 1$ (for $\alpha = 1$ one has to add the constants $|f_{0,0}|^2 + |g_{0,0}|^2$ as in \eqref{eq:Norm-Cauchy-data}).
Let ${\Bbb H}_{\alpha}$ be the space
\begin{equation}
{\Bbb H}_{\alpha} := \left\{ 
(f_{0},g_{0}) \in H^{1/2}(\Gamma_{0}) \times H^{-1/2}(\Gamma_{0}) \; ; \;
\|(f_{0},g_{0})\|_{*,\alpha} < \infty \right\}.
\end{equation}
One may verify that for each $\alpha$ the space ${\Bbb H}_{\alpha}$ endowed with the norm 
$$(f_{0},g_{0}) \mapsto \left( \|f_{0}\|_{H^{1/2}(\Gamma_{0})}^2 + \|(f_{0},g_{0})\|_{*,\alpha}^2\right)^{1/2}$$
is a Hilbert space. 

Now we claim that if $0\leq \alpha_{1} < \alpha_{2} \leq 1$, we have ${\Bbb H}_{\alpha_{1}} \cap {\Bbb H}_{\alpha_{2}} = \{0\}$. Indeed, if $(f_{0},g_{0}) \in {\Bbb H}_{\alpha_{1}} \cap {\Bbb H}_{\alpha_{2}}$, and $u_{1}$ and $u_{2}$ solve \eqref{eq:CL1-alpha} for the values $\alpha_{1}$ and $\alpha_{2}$ respectively, then $v := u_{1} - u_{2}$ satisfies $\Delta v = 0$ in $\Omega$ and $v = \partial v/\partial{\bf n} = 0$ on $\Gamma_{0}$. The unique continuation principle implies that $v \equiv 0$ in $\Omega$, and in particular $\partial v/\partial{\bf n} = v = 0$ on $\Gamma$, that is we have
$$\alpha_{1}{\partial u_{1} \over \partial{\bf n}} + (1 - \alpha_{1}) u_{1} = 0, \qquad
\alpha_{2}{\partial u_{1} \over \partial{\bf n}} + (1 - \alpha_{2}) u_{1} = 0 \qquad \mbox{on }\, \Gamma.
$$
Since $\alpha_{1} \neq \alpha_{2}$, we infer that $\partial u_{1}/\partial{\bf n} = u_{1} = 0$ on $\Gamma$, and again by the unique continuation principle we have $u_{1} \equiv 0$ on $\Omega$, which yields $f_{0} = g_{0} = 0$.

One sees that the space of Cauchy datas $(f_{0},g_{0})$ on $\Gamma_{0}$ for which one may solve uniquely, and in a well-posed manner the Cauchy problem
$$-\Delta u = 0 \quad\mbox{in }\, \Omega, \qquad
u = f_{0}\quad \mbox{and}\quad {\partial u \over \partial{\bf n}} = g_{0}\quad \mbox{on }\,\Gamma_{0}$$
contains ${\Bbb H}_{\alpha}$ for all $0 \leq \alpha \leq 1$. However it is also clear that one can exhibit many other subspaces of compatible Cauchy datas in order to solve the above equation. \qed
\end{remark}

%%%%%%%%%%%%%%%%%%%%%%%

%%%%%%%%%%%%%%%%%%%%%%%
\section{Remarks on numerical simulations}\label{sec:Rem-numerical}
We should mention that, according to the construction given in this paper, numerical simulations have been performed and other numerical experiments are under way.

The construction here as well as in \cite{GlHo08} or \cite{GlHo10} is of an open-loop type. Indeed, the construction of the vector field $X$ is given a priori, and is the basis of all the subsequent analysis.

For the time being, numerical simulations based on this open-loop control fail to be satisfactory. To compensate the open-loop strategy, G. Legendre and F-X. Vialard (both from Universit\'e Paris-Dauphine, France) have suggested to compute a new vector field $X$ at each time step. However, despite the fact that significant improvements were made, the instability pointed out in the previous sections seems to play a crucial role in the numerical difficulties in the tracking of the motion of the curve $\gamma_0$.

Other options in order to compensate these behaviours are currently being experimented, and their description and mathematical treatment are postponed to future reports and papers.

\medskip

\bibliographystyle{plain}
\bibliography{Biblio-horsin}

\end{document}